\theoremstyle{definition}
\numberwithin{equation}{section}
\newtheorem{thm}{Theorem}[section]
\newtheorem{lemm}[thm]{Lemma}
\newtheorem{cor}[thm]{Corollary}
\newtheorem{prop}[thm]{Proposition}
\newtheorem*{conv}{Convention}
\theoremstyle{definition}
\newtheorem{defi}[thm]{Definition}
\newtheorem{rem}[thm]{Remark}
\theoremstyle{remark}
\newcommand*\rel@kern[1]{\kern#1\dimexpr\macc@kerna}
\newcommand*\widebar[1]{%
  \begingroup
  \def\mathaccent##1##2{%
    \rel@kern{0.8}%
    \overline{\rel@kern{-0.8}\macc@nucleus\rel@kern{0.2}}%
    \rel@kern{-0.2}%
  }%
  \macc@depth\@ne
  \let\math@bgroup\@empty \let\math@egroup\macc@set@skewchar
  \mathsurround\z@ \frozen@everymath{\mathgroup\macc@group\relax}%
  \macc@set@skewchar\relax
  \let\mathaccentV\macc@nested@a
  \macc@nested@a\relax111{#1}%
  \endgroup
}
\DeclareMathOperator{\Euc}{Euc}
\DeclareMathOperator{\diam}{diam}
\DeclareMathOperator{\Area}{Area}
\DeclareMathOperator{\CAT}{CAT}
\newcommand{\Rom}[1]{\expandafter\@slowromancap\romannumeral #1@}
\numberwithin{equation}{section}
\numberwithin{figure}{section}
\begin{document}

\title[]{On $\CAT(\kappa)$ surfaces}

\author{Saajid Chowdhury}
\address{Department of Physics and Astronomy, Stony Brook University, Stony Brook, NY 11794, USA.}
\email{saajid.chowdhury@stonybrook.edu}

\author{Hechen Hu}
\address{Department of Mathematics, Columbia University, 2990 Broadway, New York, NY 10027, USA.}
\email{hh3032@columbia.edu}

\author{Matthew Romney}
\address{Department of Mathematics, University of Hawaii at Manoa, 2565 McCarthy Mall, Honolulu, HI 96822, USA.}
\email{mromney@hawaii.edu}

\author{Adam Tsou}
\address{Department of Applied Mathematics and Statistics, Johns Hopkins University, 3400 North Charles Street Baltimore, MD 21218, USA.}
\email{atsou2@jh.edu}

\date{\today}
\thanks{}
\subjclass[2020]{53C45}
\keywords{Alexandrov geometry, surfaces of bounded curvature, triangulation}

\begin{abstract}
    We study the properties of $\CAT(\kappa)$ surfaces: length metric spaces homeomorphic to a surface having curvature bounded above in the sense of satisfying the $\CAT(\kappa)$ condition locally. The main facts about $\CAT(\kappa)$ surfaces seem to be largely a part of mathematical folklore, and this paper is intended to rectify the situation. We provide a complete proof that $\CAT(\kappa)$ surfaces have bounded (integral) curvature. This fact allows one to apply the established theory of surfaces of bounded curvature to derive further properties of $\CAT(\kappa)$ surfaces. We also show that $\CAT(\kappa)$ surfaces can be approximated by smooth Riemannian surfaces of Gaussian curvature at most $\kappa$. We do this by giving explicit formulas for smoothing the vertices of model polyhedral surfaces.
\end{abstract}

\maketitle

\section{Introduction}

The field of Alexandrov geometry concerns two main classes of metric spaces: spaces of curvature bounded below and spaces of curvature bounded above. The latter is known as the class of \textit{$\CAT(\kappa)$ spaces}, where $\kappa \in \mathbb{R}$ is some real parameter; these can be defined as spaces in which every sufficiently small geodesic triangle is thinner than the triangle of matching edge lengths in the model space of constant curvature $\kappa$. Alexandrov geometry provides a synthetic or coordinate-free approach to the intrinsic geometry of manifolds or more complicated spaces, generalizing ideas from classical Riemannian geometry to potentially non-smooth spaces. $\CAT(\kappa)$ spaces were first studied systematically by Alexandrov \cite{Alex:57} in the 1950s, building on his investigations on the intrinsic geometry of convex surfaces; see also earlier work of Busemann \cite{Bus:48}.

In the two-dimensional setting, there is a third class of spaces that were extensively studied by Alexandrov and the Leningrad school of geometry associated with him, namely \textit{surfaces of bounded (integral) curvature}. Roughly speaking, this is the largest class of surfaces for which curvature can be meaningfully defined as a signed Radon measure. Its precise definition involves the notion of \textit{(angular) excess} of a geodesic triangle $T$, denoted by $\delta(T)$. A surface of bounded curvature is one for which, in every compactly contained neighborhood, there is a uniform upper bound on the sum of excesses of an arbitrary finite collection of non-overlapping simple triangles. See \Cref{sec:background} and \Cref{sec:cat} for complete definitions. There is a rich and well-developed theory of surfaces of bounded curvature; see the monographs by Alexandrov--Zalgaller \cite{AZ:67} and Reshetnyak \cite{Res:93} covering methods such as polyhedral approximation and conformal parametrizations, as well as the recent surveys \cite{FS:20,Tro:22}. 

We say that a \textit{$\CAT(\kappa)$ surface} is a metric space that is a topological surface and satisfies the $\CAT(\kappa)$ condition locally. The class of $\CAT(\kappa)$ surfaces includes Riemannian surfaces with Gaussian curvature at most $\kappa$ everywhere, and polyhedral surfaces for which the total angle at each vertex is at least $2\pi$. The purpose of this paper is to give complete and accessible proofs for two fundamental facts concerning $\CAT(\kappa)$ surfaces that, to our knowledge, are not satisfactorily addressed in the literature. 

\subsection{The $\CAT(\kappa)$ condition and bounded curvature}

Our first objective is the following. 

\begin{thm} \label{thm:cat_implies_bic}
Let $X$ be a $\CAT(\kappa)$ surface. Then $X$ has bounded curvature.
\end{thm}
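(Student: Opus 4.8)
The plan is to use the two properties a local $\CAT(\kappa)$ condition forces on a surface --- the upper angle bound for geodesic triangles, and the fact that the total angle at every point is at least $2\pi$ --- together with a discrete Gauss--Bonnet identity, to bound the total excess of an arbitrary non-overlapping family of triangles in a fixed neighborhood. After rescaling we may assume $\kappa\in\{-1,0,1\}$. Fix $p\in X$; assuming (as usual) that $X$ is locally compact, choose a compact convex neighborhood $R$ of $p$ with polygonal geodesic boundary --- of small diameter if $\kappa=1$ --- inside a uniquely geodesic $\CAT(\kappa)$ set $U'$, together with a smaller neighborhood $U$ with $\overline U\subseteq\Int R$. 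It suffices to find $M<\infty$ with $\sum_i|\delta(T_i)|\le M$ for every finite family of non-overlapping simple geodesic triangles $T_i\subseteq U$. Two facts will be used. \emph{(i)} Angle comparison gives $\delta(T)\le\delta(\widetilde T)$ for every geodesic triangle $T$, where $\widetilde T\subseteq M^2_\kappa$ is a comparison triangle; here $\delta(\widetilde T)\le0$ if $\kappa\le0$, while $\delta(\widetilde T)=\kappa\,\Area(\widetilde T)$ if $\kappa=1$. \emph{(ii)} The space of directions at every point of $X$ is a $\CAT(1)$ circle, hence of length $\ge2\pi$; so the total angle satisfies $\theta(x)\ge2\pi$ everywhere, and at a point in the relative interior of a geodesic arc the angle on each side is $\ge\pi$.

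For the bookkeeping, given $\{T_i\}$ refine a fixed geodesic triangulation of $R$ to a geodesic triangulation $\mathcal T$ of $R$ in which each $T_i$ is a union of faces and every new vertex lies in $\Int R$ (using uniqueness of geodesics in $U'$, and a small perturbation of the combinatorics if needed); write $\{F\}$ for the faces and $\{G\}$ for those meeting no $T_i$. Subdividing a solid triangle by geodesic arcs gives
\[
\delta(T_i)=\sum_{F\subseteq T_i}\delta(F)-\varepsilon_i,\qquad \varepsilon_i=\sum(\theta(v)-2\pi)+\sum(\psi(v)-\pi)\ge0,
\]
the sums over new interior and new edge vertices of $T_i$ respectively ($\psi$ the angle on the $T_i$-side); by \emph{(ii)}, $\psi(v)-\pi\le\theta(v)-2\pi$, so $\sum_i\varepsilon_i\le\sum_{v\in\Int R}(\theta(v)-2\pi)$. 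Discrete Gauss--Bonnet for the disk $R$ reads $\sum_F\delta(F)=c+\sum_{v\in\Int R}(\theta(v)-2\pi)$, where $c$ depends only on $\partial R$ (it is the angle sum of the boundary polygon minus $(k-2)\pi$, with $k$ its number of sides), and $c\le0$ when $\kappa\le0$ since a geodesic $k$-gon in a $\CAT(0)$ space has angle sum at most $(k-2)\pi$. Eliminating $\sum_{v\in\Int R}(\theta(v)-2\pi)$ yields $\sum_i\delta(T_i)\ge c-\sum_G\delta(G)$, and since $\delta^-(T_i)=\delta^+(T_i)-\delta(T_i)$,
\[
\sum_i|\delta(T_i)|\le 2\sum_i\delta^+(T_i)+\sum_G\delta(G)-c.
\]

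When $\kappa\le0$, fact \emph{(i)} makes every $\delta(F)\le0$, so $\sum_i\delta^+(T_i)=0$ and $\sum_G\delta(G)\le0$, leaving $\sum_i|\delta(T_i)|\le-c<\infty$: the theorem follows with no further input, and the bound depends only on the boundary of the chosen neighborhood. The case $\kappa=1$ is where I expect the real difficulty. Here one must bound $\sum_i\Area(\widetilde T_i)$ and $\sum_G\Area(\widetilde G)$ for an arbitrary non-overlapping family of small geodesic triangles. I would do this using two inputs: that a locally compact $\CAT(\kappa)$ surface carries locally finite $2$-dimensional Hausdorff measure $\mathcal H^2$ (so $\mathcal H^2(R)<\infty$), and a comparison estimate $\Area(\widetilde T)\le C\,\mathcal H^2(\mathrm{solid}\,T)$ valid for geodesic triangles of perimeter bounded away from $2\pi$ in a $\CAT(1)$ surface --- the latter proved from \emph{(i)} together with a lower bound on the area of such a triangle in terms of its side lengths. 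Since the solid triangles are non-overlapping, both sums are then $\le C\,\mathcal H^2(R)<\infty$, and the proof concludes as above. \textbf{The main obstacle is this last comparison estimate for $\kappa>0$}; everything else is either comparison geometry or combinatorics. (An alternative route to the whole theorem is to verify directly the axioms in Reshetnyak's characterization of metrics of bounded curvature on a surface, for which \emph{(i)} and \emph{(ii)} are again the key inputs.)
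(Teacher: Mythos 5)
Your $\kappa\le 0$ argument is fine and matches the paper's observation that this case is trivial; your combinatorial Gauss--Bonnet accounting is also in the right spirit and is close in flavor to the paper's first proof, which uses what they call \emph{vertex-edge triangulations}. But there are two issues, one of which you have correctly flagged as the real difficulty, and the other of which you have glossed over.

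The gap you flag is indeed the gap. For $\kappa>0$ you reduce everything to an estimate of the form $\Area(\widetilde T)\le C\,\mathcal H^2(T)$ (equivalently $\delta^+(T)\le C\,\mathcal H^2(T)$) uniformly over simple geodesic triangles $T$, and you do not prove it. This is precisely the heart of the matter, and it is delicate for \emph{thin} triangles: the obvious route via Reshetnyak majorization only gives the reverse inequality $\mathcal H^2(T)\le\Area(\widetilde T)$, and the Besicovitch inequality only gives $\mathcal H^2(T)\gtrsim L^{-2}\Area(\widetilde T)$ with a bi-Lipschitz constant $L$ that blows up as the smallest angle of $T$ tends to $0$. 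The paper's second proof addresses exactly this: after reducing to triangles with all angles bounded below (where Besicovitch suffices), it handles a thin triangle $T$ by iteratively bisecting the largest angle, peeling off ``fat'' subtriangles with a uniform lower angle bound, and showing the leftover piece shrinks to a point; summing the good estimates on the peeled pieces then gives $\delta^+(T)\le C\,\mathcal H^2(T)$. Alternatively --- and this is closer to your framework --- the paper's first and third proofs avoid the Hausdorff comparison entirely: they show (Lemma on vertex-edge subdivision, using Alexandrov's lemma) that when one subdivides a $\CAT(\kappa)$ triangle by a cevian from a vertex, the sum of model areas does not increase. Iterating this, any ``vertex-edge'' triangulation of a triangle $S$ into faces $F$ satisfies $\sum_F\Area(\widetilde F)\le\Area(\widetilde S)$, which is exactly the bound you need on $\sum\Area(\widetilde F)$, with no appeal to $\mathcal H^2$ at all. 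If you refine $\{T_i\}$ to a vertex-edge triangulation of each background triangle $S$ of $R$, your proof closes.

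There is also a subtler point you pass over. Your $\varepsilon_i\ge 0$ needs the sum of \emph{face angles} around a newly introduced interior vertex $v$ to be at least $2\pi$. What fact \emph{(ii)} gives you is that the \emph{total angle} (a supremum over refinements) is $\ge 2\pi$; for a fixed finite geodesic fan at $v$, the sum of consecutive Alexandrov angles is only $\le$ this supremum, and there is no automatic reverse inequality before one knows $X$ has bounded curvature. The standard way around this --- which the paper uses throughout --- is to never introduce interior vertices: every new vertex lies in the relative interior of an edge, where the needed inequality is simply that the angles on one side of a geodesic sum to at least $\pi$, an immediate consequence of the triangle inequality for upper angles. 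So you should restrict to vertex-edge (cevian) subdivisions; this simultaneously fixes the angle-additivity issue and gives you the model-area monotonicity from the previous paragraph. Finally, note that the definition of bounded curvature used in the paper only asks for an upper bound on $\sum_i\delta(T_i)$, so the Gauss--Bonnet bookkeeping for $\sum_i|\delta(T_i)|$ is extra work; it is not wrong, but it is not needed.
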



This fact is perhaps not as well known as it could be. For example, some of the theorems in the recent papers \cite{BM:21} and \cite{CL:16} on $\CAT(\kappa)$ surfaces are essentially special cases of more general results for bounded curvature surfaces, and the connection between these two classes of surfaces is not made. 
\Cref{thm:cat_implies_bic} is trivial if $\kappa \leq 0$, since in this case every triangle has non-positive excess. In the case where $\kappa>0$ its validity is \textit{a priori} not clear. We note that the analogue of \Cref{thm:cat_implies_bic} for surfaces with curvature bounded below is also true. A proof has been given by Richard in \cite{Ric:18}. 

\Cref{thm:cat_implies_bic} was already observed by Alexandrov himself in \cite[Section 1.8]{Alex:57} in his seminal work on $\CAT(\kappa)$ spaces, justified by an explanation to the effect that one can develop a parallel theory to that of bounded curvature surfaces for surfaces with curvature bounded above by $\kappa$. Alexandrov attributes the result to Zalgaller; however, an elaboration of this approach does not exist in the literature to our knowledge, and in any case seems to be more work than necessary. More recently, a separate proof of \Cref{thm:cat_implies_bic} was sketched by Machigashira--Ohtsuka in \cite{MO:01}. In fact, Lemma 5.3 in \cite{MO:01} states, in more generality, the following basic inequality relating excess and curvature, from which \Cref{thm:cat_implies_bic} follows easily.

\begin{thm} \label{thm:excess_curvature}
Let $T$ be a triangle in a $\CAT(\kappa)$ surface that is the boundary of a closed disk. Then $\delta(T) \leq \kappa |T|$.
\end{thm}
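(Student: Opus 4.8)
The plan is to combine the $\CAT(\kappa)$ angle comparison with Reshetnyak's majorization theorem, following the sketch of Machigashira--Ohtsuka: the case $\kappa \le 0$ will be essentially immediate, and the case $\kappa > 0$ will require an additional subdivision argument. We may assume $T$ is small enough that it bounds a Jordan domain $\Fill(T)$ lying in a convex $\CAT(\kappa)$ ball --- the general case follows by subdividing $T$ into such triangles and summing, once we know excess does not decrease under geodesic subdivision (below) --- and we write $|T| = \Area(\Fill(T))$. Rescaling the metric, we may also assume $\kappa \in \{-1, 0, 1\}$; the case $\kappa = 0$ is trivial, since $\delta(T) \le 0$ in any $\CAT(0)$ surface. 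For a small geodesic triangle $T$ with comparison triangle $\widetilde{T} \subset M_\kappa^2$ we will use repeatedly: (i) angle comparison, $\delta(T) \le \delta(\widetilde{T})$; (ii) Gauss--Bonnet in the model, $\delta(\widetilde{T}) = \kappa\,\Area(\widetilde{T})$; and (iii) Reshetnyak majorization, which produces a $1$-Lipschitz map $\widetilde{T} \to X$ carrying $\partial\widetilde{T}$ onto $T$, hence (by a degree argument) whose image contains $\Fill(T)$, so that $|T| \le \Area(\widetilde{T})$.

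For $\kappa = -1$ the argument is then complete in one line: since $\kappa < 0$,
\[
\delta(T) \;\le\; \delta(\widetilde{T}) \;=\; \kappa\,\Area(\widetilde{T}) \;\le\; \kappa\,|T|.
\]
For $\kappa = 1$ the same chain yields only $\delta(T) \le \Area(\widetilde{T})$, which is too weak, since (iii) now forces $\Area(\widetilde{T}) \ge |T|$. To fix this we subdivide and pass to a limit. Triangulate $\Fill(T)$ into geodesic triangles $T_1, \dots, T_N$ of mesh $< \varepsilon$ that are $\rho_0$-fat (every angle $\ge \rho_0$) for a fixed $\rho_0 > 0$ independent of $\varepsilon$; such triangulations exist. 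An angle-sum bookkeeping together with Euler's formula for the triangulated disk gives
\[
\delta(T) \;\le\; \sum_{j=1}^N \delta(T_j),
\]
the deficit being $\sum_v (\theta_v - 2\pi) \ge 0$ over the interior vertices $v$, where $\theta_v$ is the total angle at $v$; this is $\ge 2\pi$ because the space of directions at a point of a $\CAT(\kappa)$ surface is a metric circle of length $\ge 2\pi$. Applying (i)--(ii) to each $T_j$ gives $\delta(T) \le \sum_j \Area(\widetilde{T}_j)$, so it remains to prove $\sum_j \Area(\widetilde{T}_j) \to |T|$ as $\varepsilon \to 0$.

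This convergence is the crux, and I expect it to be the main obstacle. Writing $\sum_j \Area(\widetilde{T}_j) = |T| + \sum_j \big(\Area(\widetilde{T}_j) - \Area(\Fill(T_j))\big)$ and noting each summand is $\ge 0$ by (iii), it suffices to establish the infinitesimal estimate: for a $\rho_0$-fat geodesic triangle $S$ of diameter $r \to 0$ in $X$,
\[
\Area(\Fill(S)) \;=\; \bigl(1 - o(1)\bigr)\,\Area(\widetilde{S}),
\]
with the $o(1)$ depending only on $\rho_0$ and $r$; then each summand above is $o(\Area(\Fill(T_j)))$ uniformly in $j$, so the total is $o(|T|)$. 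The upper bound $\Area(\Fill(S)) \le \Area(\widetilde{S})$ is once more Reshetnyak; the matching lower bound --- that a small fat geodesic triangle cannot enclose substantially less area than its comparison triangle --- is the delicate point. I anticipate deriving it from the fact that $\Fill(S)$ contains a geodesic fan issuing from one vertex of $S$ with angular width $\ge \rho_0$ and radii comparable to $r$, together with the $\CAT(1)$ condition on spaces of directions, which keeps the area of such a fan within $o(r^2)$ of its Euclidean value; a blow-up and compactness argument ruling out degenerate (lower-dimensional) behavior at scale $r$ should serve equally well. Granting the estimate, the error sum vanishes and $\delta(T) \le |T| = \kappa|T|$, as claimed.
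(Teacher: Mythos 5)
Your overall scaffolding---subdivide into small fat triangles, use the $\CAT(\kappa)$ angle comparison plus Gauss--Bonnet in the model to bound each $\delta(T_j)$ by $\kappa\,\Area(\widetilde{T}_j)$, and then try to pass to the limit on areas---is broadly in the spirit of the paper's proof, and the $\kappa \le 0$ cases as well as the subdivision inequality $\delta(T) \le \sum_j \delta(T_j)$ are correct (the latter matches the paper's \Cref{lemm:vertex-edge-prelim}, generalized via the interior-vertex angle bound). The divergence is exactly at the step you flag as the crux. The paper approximates $T$ by a model polyhedral surface $T_n$ as in the third proof of \Cref{thm:cat_implies_bic}, applies Gauss--Bonnet for surfaces of bounded curvature to $T_n$, uses that all vertices of $T_n$ have nonpositive curvature so that $\omega_+(T_n) = \kappa|T_n|$, and then cites the convergence $|T_n| \to |T|$ from the bounded-curvature theory made available by \Cref{thm:cat_implies_bic} (cf.\ Theorem VIII.2 of Alexandrov--Zalgaller). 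You instead try to prove the analogous area convergence from scratch via the infinitesimal estimate $\Area(\Fill(S)) = (1-o(1))\,\Area(\widetilde{S})$, uniform over small $\rho_0$-fat triangles.

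That estimate is false in the uniform form you need. Take a Euclidean cone of total angle $\theta = 5\pi/2$ at its apex $O$; this is $\CAT(0)$, hence $\CAT(\kappa)$ for all $\kappa \ge 0$. Place $P,Q,R$ at distance $r$ from $O$, separated by angle $\theta/3 = 5\pi/6 < \pi$. Then $\triangle PQR$ is equilateral with each angle $\pi/6$ (so $\rho_0 = \pi/6$-fat), it encloses $O$, and a direct computation gives $\Area(\Fill(PQR)) = \tfrac{3}{4}r^2$ while $\Area(\widetilde{PQR}) = \tfrac{2\sqrt{3}+3}{4}\,r^2 \approx 1.62\,r^2$, a ratio $\approx 0.46$ that is scale-invariant and hence persists as $r \to 0$. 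So the estimate fails uniformly near any cone point of total angle greater than $2\pi$, and such points are generic on $\CAT(\kappa)$ surfaces (every model polyhedral $\CAT(\kappa)$ surface has them). Your two proposed repairs do not rescue this: a blow-up at the cone point reproduces the cone itself, confirming rather than excluding the deficit, and a geodesic fan from a vertex of $S$ sweeping over an interior cone point likewise has an area deficit of order $r^2$.

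The argument could in principle be patched by invoking exactly the machinery the paper uses: once \Cref{thm:cat_implies_bic} is in hand, the surface has bounded curvature, the cone points are at most countable and carry zero area, the triangles of a fine triangulation that contain cone points carry total area $o(1)$, and the infinitesimal estimate \emph{does} hold at the remaining non-cone points by Reshetnyak's metric-tangent theorem. But at that point you are re-deriving the area convergence the paper simply cites, and you would also need to justify the existence of $\rho_0$-fat geodesic triangulations of arbitrarily small mesh on a $\CAT(\kappa)$ surface, which \Cref{thm:triangulation} does not supply. As written, the proposal has a genuine gap at its central step.
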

Here, $|T|$ denotes the area of the interior of $T$, which we define as the Hausdorff $2$-measure of this set. According to \cite{MO:01}, \Cref{thm:excess_curvature} can be established in a similar manner to an analogous inequality for surfaces with curvature bounded below, which was done in earlier work of Machigashira \cite[Theorem 2.0]{Mach:98}. However, there are various qualitative differences between $\CAT(\kappa)$ spaces and spaces of curvature bounded below, and thus we feel that it is not straightforward to adapt the proof in \cite{Mach:98} to the $\CAT(\kappa)$ case.

In light of the state of the literature, Fillastre asked in \cite{Fil:MO} for complete details of a proof. In this paper, we answer the request by giving a complete proof of \Cref{thm:cat_implies_bic}. Our approach is based on a recent theorem of the third-named author with Creutz \cite{CR:22} on decomposing an arbitrary surface with a length metric into non-overlapping simple convex triangles, which we refer to as a \textit{triangulation} of the the surface. See \Cref{thm:triangulation} below. The main tool we use is a special type of triangulation that can be obtained by inductively subdividing a base triangulation, which we call a \textit{vertex-edge triangulation}. 

We remark that $\CAT(\kappa)$ surfaces can be equivalently defined as limits of smooth or polyhedral surfaces of locally uniformly bounded curvature. As an alternative proof of \Cref{thm:cat_implies_bic}, it is possible to use the triangulation theorem in \cite{CR:22} to verify this equivalent condition. However, the proof we give in this paper seems to us the most intuitive and comprehensive. 

Later in the paper, we show how \Cref{thm:excess_curvature} itself follows readily as a consequence of \Cref{thm:cat_implies_bic} using the theory of surfaces of bounded curvature. One technical point relates to the definition of \textit{area}. As stated above, we take the Hausdorff $2$-measure as our definition. The original work of Alexandrov (cf. \cite{Alex:57}) uses a different definition based on approximation by Euclidean polyhedral surfaces, which we call the \textit{Alexandrov area}. We verify that the two notions of area are equivalent for $\CAT(\kappa)$ surfaces, or more generally for surfaces of bounded curvature without cusp points:
\begin{prop} \label{prop:hausdorff_vs_alexandrov}
For any surface of bounded curvature without cusp points, the Alexandrov area coincides with the Hausdorff $2$-measure.  
\end{prop}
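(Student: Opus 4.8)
The plan is to use the structure theory of surfaces of bounded curvature to reduce to a local comparison between the two measures. Recall that for a surface $X$ of bounded curvature, Reshetnyak's theorem provides, around each point, a conformal parametrization from a planar domain $\Omega$ in which the metric has the form $e^{2u}\,|dz|^2$ for a function $u$ that is a difference of two subharmonic functions (i.e. $u \in W^{1,2}_{\loc}$ with $\Delta u$ a signed Radon measure), provided there are no cusp points so that this weight is genuinely locally integrable and the parametrization is bi-Lipschitz-like on compact pieces. The first step is therefore to recall this parametrization and note that both candidate areas are Radon measures on $X$: the Hausdorff $2$-measure is Borel regular by general theory, and the Alexandrov area, defined via infima of areas of inscribed Euclidean polyhedral surfaces, is known (from Alexandrov--Zalgaller \cite{AZ:67}) to be a Radon measure as well. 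Since two Radon measures that agree on a basis of open sets coincide, it suffices to prove the identity on (the images of) small coordinate disks.

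Second, I would compute both sides in the conformal coordinate. For the Hausdorff measure: the identity map from $(\Omega, e^{2u}|dz|^2)$ to $X$ is locally a homeomorphism, and away from a set of zero Lebesgue measure (indeed $u$ is continuous off a countable set in the surface-of-bounded-curvature setting, or at least approximately continuous a.e.) the metric is infinitesimally a scaled Euclidean metric with conformal factor $e^{u}$; the area formula / Kirszbraun-type arguments then give $\mathcal H^2 = \int e^{2u}\,d\Leb^2$ on coordinate patches. This is essentially the standard computation that the Hausdorff $2$-measure of a conformally Euclidean length metric is $\int e^{2u}$, and it holds here because the absence of cusps guarantees $e^{2u} \in L^1_{\loc}$ and the parametrization does not collapse positive-measure sets. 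For the Alexandrov side, I would invoke the classical fact (Alexandrov--Zalgaller \cite{AZ:67}, or Reshetnyak \cite{Res:93}) that for a surface of bounded curvature the Alexandrov area is exactly $\int e^{2u}\,d\Leb^2$ in the same conformal coordinate — this is built into how the analytic description of such surfaces is set up, since the polyhedral approximations converge with areas converging to $\int e^{2u}$. Matching the two expressions on each patch finishes the proof.

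The role of the cusp-free hypothesis should be highlighted explicitly: at a cusp point the conformal factor $e^{2u}$ fails to be locally integrable, the point has a neighborhood of finite diameter but the Hausdorff $2$-measure and the Alexandrov area can genuinely differ (or the latter may not even be locally finite in the naive sense), so the reduction above breaks down. Away from cusps this pathology is excluded, which is precisely what makes the two notions agree. For the application to $\CAT(\kappa)$ surfaces one then remarks that, by \Cref{thm:cat_implies_bic}, such a surface has bounded curvature, and a $\CAT(\kappa)$ surface has no cusp points — a cusp would force triangles of arbitrarily small perimeter with excess bounded away from $0$ relative to their area, contradicting the local thinness condition — so the proposition applies.

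I expect the main obstacle to be the careful bookkeeping in the first computation: verifying that the identification $\mathcal H^2 = \int e^{2u}\,d\Leb^2$ holds for the intrinsic metric $e^{2u}|dz|^2$ when $u$ is only a difference of subharmonic functions (hence possibly discontinuous on a countable set and unbounded). One must check that the exceptional set where $u = \pm\infty$ or where the conformal structure degenerates is $\mathcal H^2$-null and $\Leb^2$-null, using that $\Delta u$ is a finite measure on compact sets and that cusp points — the only points of infinite concentration of $-\Delta u$ — are excluded by hypothesis. Once this measure-theoretic point is settled, the comparison with the Alexandrov area is essentially a citation to the monograph literature.
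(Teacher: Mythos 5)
Your route is genuinely different from the paper's. You go through the \emph{analytic} half of Reshetnyak's theory --- the conformal parametrization $ds^2 = e^{2u}\,|dz|^2$ with $u$ a difference of subharmonic functions --- reduce to a local comparison of Radon measures, and aim to show both notions of area equal $\int e^{2u}\,d\Leb^2$ in the conformal chart. The paper uses instead the \emph{geometric} half: the theorem that at every non-cusp point the tangent cone is a Euclidean cone over a circle, giving, for each $\varepsilon>0$, balls that are $(1+\varepsilon)$-bi-Lipschitz to cone balls, combined with the triangulation theorem. It reduces to a polygon $A$, triangulates with mesh $\varepsilon$ subordinate to those charts, replaces each small triangle $T$ by its Euclidean comparison triangle $\bar{T}_0$ to form a polyhedral surface $A_\varepsilon$, observes $|A_\varepsilon| = \Area(A_\varepsilon)$ trivially, that $(1+\varepsilon)^{-2}|A| \le |A_\varepsilon| \le (1+\varepsilon)^2|A|$ by the bi-Lipschitz bound, and that $\Area(A_\varepsilon) \to \Area(A)$ by Theorem VIII.2 of \cite{AZ:67}; sending $\varepsilon \to 0$ finishes. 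The geometric route pays in citations to the triangulation and tangent-cone theorems, but never has to control the conformal weight pointwise.

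The genuine gap in your version is exactly the step you flag yourself: the identity $\mathcal{H}^2 = \int e^{2u}\,d\Leb^2$ for the intrinsic length metric of the weight $e^{2u}$ when $u$ is only a difference of subharmonic functions. This is not a routine application of the area formula. The identity map from $(\Omega, |dz|)$ to the length space is not Lipschitz; $u$ can equal $+\infty$ on a polar set and is merely semicontinuous; and the infinitesimal description $d(z,w) \approx e^{u(z)}|z-w|$ requires a quantitative, pointwise-a.e.\ comparison between the intrinsic distance and the weighted Euclidean distance, which must be extracted from Reshetnyak's potential-theoretic estimates before an area-formula or Kirszbraun-type argument can be run. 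Checking that the exceptional set is null is necessary but not sufficient for this. As written, ``area formula / Kirszbraun-type arguments'' does not close the step, and this step is the crux of the whole proof: either cite a precise statement from the literature that $\mathcal{H}^2$ of a cusp-free Reshetnyak metric equals $\int e^{2u}\,d\Leb^2$, or prove it. The paper's polyhedral comparison is precisely a way to avoid ever proving that analytic identity.
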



\subsection{Smooth approximation}
The second objective of this paper concerns approximation by smooth Riemannian surfaces. We prove the following result.

\begin{thm} \label{thm:smooth_approx}
Let $(X,d)$ be a $\CAT(\kappa)$ surface for some $\kappa \in \mathbb{R}$. Then there exists a sequence of smooth Riemannian metrics $d_n$ on $X$ with Gaussian curvature at most $\kappa$ such that the sequence $(X,d_n)$ converges uniformly to $(X,d)$. Moreover, the metrics $d_n$ have the property that $\limsup_{n \to \infty} |A_n| = |A|$ for all compact sets $A \subset X$. Here, $A_n$ denotes the same set $A$ equipped with the metric $d_n$.
\end{thm}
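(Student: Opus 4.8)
The plan is to approximate $(X,d)$ by \emph{model polyhedral surfaces}---surfaces obtained by isometrically gluing geodesic triangles of the model plane $M_\kappa^2$ of constant curvature $\kappa$---and then to replace a small neighborhood of each vertex by an explicit smooth rotationally symmetric cap of Gaussian curvature at most $\kappa$. Given $\epsilon>0$, the triangulation theorem of \cite{CR:22} furnishes a triangulation $\T_\epsilon$ of $X$ into non-overlapping simple convex triangles of diameter at most $\epsilon$. I replace each $T\in\T_\epsilon$ by the comparison triangle $\widetilde T\subset M_\kappa^2$ with the same side lengths (permissible once $\epsilon$ is small, when $\kappa>0$) and reglue according to the combinatorics of $\T_\epsilon$; this yields a polyhedral surface $P_\epsilon$, homeomorphic to $X$, locally isometric to $M_\kappa^2$ off its discrete vertex set $V_\epsilon$. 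At each vertex the total angle of $P_\epsilon$ equals the sum of the comparison angles of the incident triangles, which by the angle comparison for $\CAT(\kappa)$ spaces is at least the total angle of $X$ there, hence at least $2\pi$, since a $\CAT(\kappa)$ topological surface has total angle $\ge2\pi$ at every point. Thus the curvature measure of $P_\epsilon$ is $\kappa\,dA$ plus a nonpositive atomic part on $V_\epsilon$. The comparison maps of the convex triangles assemble into a $1$-Lipschitz homeomorphism $P_\epsilon\to X$ (Reshetnyak majorization), so $d\le d_{P_\epsilon}$ after identification, and a matching upper bound using the near-flatness of small convex triangles gives $d_{P_\epsilon}\to d$ uniformly on compact sets as $\epsilon\to0$.

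Next I smooth the vertices. A neighborhood of a vertex of $P_\epsilon$ of total angle $2\pi\lambda$, $\lambda\ge1$, is isometric to a truncated cone $dr^2+\lambda^2 f_\kappa(r)^2\,d\varphi^2$, $\varphi\in\R/2\pi\Z$, where $f_\kappa$ solves $f''+\kappa f=0$ with $f(0)=0$, $f'(0)=1$. I replace the disk $\{r\le\rho\}$ by a smooth rotationally symmetric cap $dr^2+h(r)^2\,d\varphi^2$ on $\{0\le r\le\rho\}$, with $h>0$ on $(0,\rho]$, $h=f_\kappa$ near $r=0$ (so the cap is smooth at its center), and $h=\lambda f_\kappa(\,\cdot-c)$ near $r=\rho$ for a suitable $c=c(\lambda,\kappa,\rho)\in(0,\rho)$ with $c\to0$ as $\rho\to0$. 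Writing $h=f_\kappa+w$ with $w''+\kappa w=e$ for a nonnegative bump $e$ supported in the transition annulus makes the Gaussian curvature $-h''/h=\kappa-e/h\le\kappa$ automatically; variation of parameters puts $h$ near $r=\rho$ into the form $f_\kappa(1+P)-f_\kappa'\,Q$ with $P,Q\ge0$ read off from $e$, and matching this to $\lambda f_\kappa(\,\cdot-c)$ to all orders determines $c$ and the normalization of $e$ via a one-parameter intermediate-value argument. Since the target profile is only a radial translate of the ambient cone, after reparametrizing the radial coordinate of $P_\epsilon$ near that vertex by $c$ the cap glues $C^\infty$; the net effect is to delete a metric disk of radius $\approx\rho-c$ about the vertex and insert a smooth disk of intrinsic radius $\rho$. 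Doing this at every vertex with $\rho=\rho_\epsilon$ chosen small enough that the caps are pairwise disjoint and $(\#V_\epsilon)\,\rho_\epsilon\to0$ produces a smooth Riemannian metric of Gaussian curvature $\le\kappa$ agreeing with $d_{P_\epsilon}$ outside the caps; along a sequence $\epsilon_n\to0$ this gives the sought $d_n$, and $d_n\to d$ uniformly on compact sets.

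For the area statement, the caps have total area $O((\#V_n)\rho_n^2)\to0$ and $d_n=d_{P_n}$ off them, so $\liminf_n|A_n|\ge|A|$ follows from $d_n\ge d$ (hence $\mathcal H^2_{d_n}\ge\mathcal H^2_d$) up to a negligible set, while $\limsup_n|A_n|\le|A|$ reduces to the vanishing of the total area defect $\sum_{T\in\T_n}(|\widetilde T|-|T|)$. This is where \Cref{thm:cat_implies_bic} is used: since $X$ has bounded curvature, all but finitely many triangles carry arbitrarily small integral curvature as the mesh shrinks, so their comparison triangles are area-close with defect controlled by the diameter and the integral curvature, while the finitely many triangles meeting an atom of the curvature are small cone sectors whose defects also vanish; summability of the integral curvature makes the series converge. (One could alternatively appeal to \Cref{prop:hausdorff_vs_alexandrov}, since the comparison-triangle polyhedra are precisely those computing the Alexandrov area.) I expect the main difficulty to be the smoothing step: a short computation with the warped-product curvature formula shows that a cone of angle strictly greater than $2\pi$ \emph{cannot} be smoothed to curvature $\le\kappa$ by a modification leaving the model-cone metric unchanged near the boundary, so one is forced to match a shifted cone, and arranging the shift to be compatible both with the curvature bound and with the $C^\infty$ gluing is the delicate point; the vanishing of the area defect is the other spot where genuine content is required.
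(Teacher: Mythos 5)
Your overall strategy coincides with the paper's: approximate $(X,d)$ by model polyhedral surfaces built from the \cite{CR:22} triangulation (where the $\CAT(\kappa)$ angle comparison guarantees every cone angle is at least $2\pi$), and then smooth the cone vertices by an explicit rotationally symmetric cap of curvature at most $\kappa$, with the area statement following because the cap areas vanish and the polyhedral areas converge to the Hausdorff $2$-measure (\Cref{prop:hausdorff_vs_alexandrov}). What is genuinely different is the mechanism of the vertex-smoothing lemma (the analogue of \Cref{lemm:polyhedral_approx}). The paper works in conformal polar coordinates $ds^2=\lambda(r)(dr^2+r^2\,d\theta^2)$ with $\lambda$ as in \eqref{equ:polar_metric}, interpolates $\partial_r\log\lambda$ with the flat (or hyperbolic) log-derivative using a smooth cutoff, integrates to recover $\lambda_\delta$, and verifies $K_\delta\le\kappa$ through a chain of inequalities culminating in a Young/convexity estimate such as \eqref{equ:curvature_verification}. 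You instead work in geodesic polar coordinates $ds^2=dr^2+h(r)^2\,d\varphi^2$ and produce $h$ as the solution of the inhomogeneous Jacobi equation $h''+\kappa h=e$ with $e\ge0$, so that $K=-h''/h=\kappa-e/h\le\kappa$ holds \emph{by design}; the gluing condition then becomes a two-parameter algebraic matching (your $1+P=\lambda f_\kappa'(c)$, $Q=\lambda f_\kappa(c)$, constrained by $(1+P)^2+\kappa Q^2=\lambda^2$) solved by scaling $e$ and an intermediate-value argument. This is conceptually cleaner for the curvature bound but buys that cleanliness at the price of a nonconstructive existence step, whereas the paper's approach produces a closed-form formula for $\lambda_\delta$.

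Two spots in your sketch deserve care. First, the assertion $d_n\ge d$ for the \emph{smoothed} metric is not literal: the $1$-Lipschitz majorizing maps give $d\le d_{P_n}$ for the polyhedral approximant, but replacing a cone disk of radius $\rho_n-c$ by a smooth cap of radius $\rho_n$ perturbs distances by $O(\sum_i c_i)$, so you need to control $\sum_i c_i$ (analogous to the paper's requirement $\sum_i r_i<\varepsilon_n$) before deducing the area lower bound up to $o(1)$. Second, your direct route to $\sum_T(|\widetilde T|-|T|)\to0$ via bounded curvature is, as you note yourself, the place where genuine quantitative content is needed; the paper sidesteps this by invoking \Cref{prop:hausdorff_vs_alexandrov} and the standard weak convergence of polyhedral areas in Alexandrov--Zalgaller theory, which is the cleaner route and one you already identify as an alternative.
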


While one naturally expects \Cref{thm:smooth_approx} to be true, we have not found any complete statement in the literature. From the theory of surfaces of bounded curvature comes the fact that any $\CAT(\kappa)$ surface can be approximated by smooth surfaces with locally uniformly bounded integral curvature, though without any pointwise control on the curvature. See Chapter III of \cite{AZ:67} and Section 6.2 of \cite{Res:93}. In \cite[Theorem 0.11]{BuBu:98}, Burago--Buyalo show that any $\CAT(\kappa)$ surface can be approximated by piecewise smooth $\CAT(\kappa)$ surfaces (more generally, their result covers any 2-dimensional locally $\CAT(\kappa)$ polyhedron). Recently, the case of \Cref{thm:smooth_approx} where $\kappa<0$ has been covered by a lemma of Labeni \cite{Lab:21}. Labeni's argument is based on the approximating cones with vertices of negative curvature in anti-de Sitter space by smooth convex surfaces.

To prove \Cref{thm:smooth_approx}, the main task is to show that any vertex of a $\CAT(\kappa)$ polyhedral surface can be smoothed while retaining the $\CAT(\kappa)$ condition. We found that this can be accomplished via an explicit and relatively simple formula that interpolates between a metric of constant positive curvature away from the vertex with a metric of constant negative curvature around the vertex.

The analogue of \Cref{thm:smooth_approx} for surfaces of curvature bounded below has been addressed in full generality by Itoh--Rouyer--V\^{\i}lcu \cite{IRV:15}; see also \cite{Ram:15,Ric:18} and the discussion in \cite{Kap:MO,makt:MO} concerning the curvature bounded below case. Our approach also works for smoothing surfaces of curvature bounded below by $\kappa$, with the advantage of being more explicit than the approach in \cite{IRV:15}; see \Cref{rem:cbb_case}. Thus we provide a concrete and uniform approach to the general problem of smoothing vertices of polyhedra.

\subsection{The bigger picture} \label{sec:bigger}

We now elaborate more on the consequences of \Cref{thm:cat_implies_bic} and place our work within a larger context. While the $\CAT(\kappa)$ condition provides an effective axiomatic condition for studying metric surfaces, there is a variety of increasingly general conditions that have also been studied in the literature. Indeed, the $\CAT(\kappa)$ condition is fairly restrictive; roughly speaking, a $\CAT(\kappa)$ surface is one that is the limit of spherical polyhedral surfaces of uniformly bounded area with vertices of negative curvature. We state the following omnibus theorem, which is a compilation of easy or known facts, along with \Cref{thm:cat_implies_bic}. For simplicity, we assume that the metric surface $X$ is closed. 

\begin{thm} \label{thm:omni}
Let $X$ be a closed length surface and $\kappa \in \mathbb{R}$. The following conditions are such that each condition implies the next.
\begin{enumerate}[label=(\Alph*)]
    \item \label{item:A} $X$ is a smooth Riemannian $2$-manifold with Gaussian curvature at most $\kappa$ everywhere.
    \item \label{item:B} $X$ is a $\CAT(\kappa)$ surface.
    \item \label{item:C} $X$ is a surface of bounded curvature without cusp points.
    \item \label{item:D} $X$ is bi-Lipschitz equivalent to a constant curvature surface of the same topology. In particular, each point of $X$ has a neighborhood bi-Lipschitz equivalent to a Euclidean disk.
    \item \label{item:E} $X$ is Ahlfors $2$-regular and linearly locally contractible.
    \item \label{item:F} $X$ satisfies a quadratic isoperimetric inequality.
    \item \label{item:G} $X$ has locally finite Hausdorff $2$-measure.
\end{enumerate}
\end{thm}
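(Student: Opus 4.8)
The plan is to establish the implications one at a time, in the stated order, relying on classical comparison geometry for the first link, on the already-proved \Cref{thm:cat_implies_bic} and the conformal theory of surfaces of bounded curvature for the middle links, and on soft metric-space arguments for the last three. For \ref{item:A} $\Rightarrow$ \ref{item:B}, I would cite the classical local comparison theorem (going back to Alexandrov \cite{Alex:57}): on a smooth Riemannian surface with Gaussian curvature at most $\kappa$, every point has a geodesically convex normal neighborhood --- of radius less than $\pi/\sqrt{\kappa}$ when $\kappa>0$, of arbitrarily small radius otherwise --- on which Toponogov-type triangle comparison against the model surface of curvature $\kappa$ holds, which is exactly the local $\CAT(\kappa)$ condition. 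The implication \ref{item:B} $\Rightarrow$ \ref{item:C} has two parts. Bounded curvature is precisely \Cref{thm:cat_implies_bic}. That there are no cusp points follows from the standard fact that a $\CAT(\kappa)$ surface has total angle $\theta(p)\ge 2\pi$ at every point $p$: the tangent cone at $p$ is a flat cone of angle $\theta(p)$, and a flat cone is $\CAT(0)$ only if its angle is at least $2\pi$ (check a triangle encircling the apex). Hence the atom of the curvature measure at $p$ equals $2\pi-\theta(p)\le 0$, so no point carries the atom of mass $2\pi$ that characterizes a cusp.

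The substantive link is \ref{item:C} $\Rightarrow$ \ref{item:D}, for which I would invoke Reshetnyak's conformal representation of surfaces of bounded curvature (\cite{Res:93}; see also \cite{AZ:67}): the metric is conformally Euclidean, of the form $e^{2h}\,|dz|^2$ in local isothermal coordinates, with $h$ a difference of Newtonian potentials of the positive and negative parts of the curvature measure $\omega$, which is finite since $X$ is closed. Near a non-cusp point, $\omega^+$ has an atom of mass strictly less than $2\pi$; after extracting the single resulting logarithmic singularity of $h$ --- which amounts precisely to passing through a flat cone of positive angle, and is thus absorbed into a bi-Lipschitz change of coordinates --- the remaining conformal factor is bounded above and below on a neighborhood, yielding the local bi-Lipschitz equivalence with a Euclidean disk. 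Globally, classical uniformization identifies the underlying conformal structure with that of the constant-curvature model $X_0$ of the same topology; pulling the metric back along the uniformizing map and using that $\omega$ has only finitely many atoms (hence only finitely many points near which a cone must be straightened) produces a homeomorphism $X_0 \to X$ that is locally bi-Lipschitz with a uniform constant over the compact surface $X_0$, hence globally bi-Lipschitz.

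The final three implications are soft. We have \ref{item:D} $\Rightarrow$ \ref{item:E} because Ahlfors $2$-regularity and linear local contractibility are both invariant under bi-Lipschitz homeomorphisms, with controlled change of constants, and both hold trivially for a closed surface of constant curvature. For \ref{item:E} $\Rightarrow$ \ref{item:F} I would appeal to the known implication, from the theory of the metric Plateau problem, that an Ahlfors $2$-regular, linearly locally contractible metric surface admits a quadratic isoperimetric inequality: linear local contractibility fills short loops by disks, Ahlfors regularity bounds the areas of such fillings, and a decomposition/iteration argument promotes this to the quadratic estimate. Finally, for \ref{item:F} $\Rightarrow$ \ref{item:G}: in a small disk chart around a point $x$, concatenating geodesics between finitely many points on a metric circle about $x$ produces a short rectifiable closed curve enclosing a neighborhood $V$ of $x$; a filling Sobolev disk for this curve has area at most a constant times the square of its length, its image contains $V$ by a degree argument, and the Hausdorff $2$-measure of the image is bounded by the area of the filling, so $\mathcal{H}^2(V)<\infty$, giving local finiteness.

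The main obstacle is \ref{item:C} $\Rightarrow$ \ref{item:D}: although \emph{known}, it rests on the full conformal uniformization theory of Reshetnyak together with careful bookkeeping of the logarithmic singularities of the conformal factor at the atoms of the curvature measure, and of the globalization over the compact model surface. Of the remaining links, \ref{item:B} $\Rightarrow$ \ref{item:C} is where the genuine work of this paper is concentrated, while \ref{item:D} $\Rightarrow$ \ref{item:E} is immediate and \ref{item:E} $\Rightarrow$ \ref{item:F} is an appeal to an established result whose proof lies outside our scope.
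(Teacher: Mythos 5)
Your proposal reconstructs essentially the same chain the paper uses, and the overall structure is correct: \ref{item:A} $\Rightarrow$ \ref{item:B} is classical comparison geometry (paper cites the appendix to Chapter II.1 of Bridson--Haefliger rather than Alexandrov, but it is the same fact); \ref{item:B} $\Rightarrow$ \ref{item:C} is \Cref{thm:cat_implies_bic} plus the absence of cusp points; \ref{item:D} $\Rightarrow$ \ref{item:E} and \ref{item:F} $\Rightarrow$ \ref{item:G} are elementary; and \ref{item:E} $\Rightarrow$ \ref{item:F} is exactly the Lytchak--Wenger Plateau-problem result you allude to.

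The one genuinely different route is \ref{item:C} $\Rightarrow$ \ref{item:D}. You go through Reshetnyak's conformal representation, extraction of the logarithmic singularities at atoms of $\omega^+$, and classical uniformization of the underlying conformal structure. The paper instead cites Burago's result directly, whose mechanism is Reshetnyak's tangent-cone theorem: at each non-cusp point the metric tangent is a Euclidean cone over a circle, and each point has a $(1+\varepsilon)$-bi-Lipschitz neighborhood of such a cone, from which Burago builds the global bi-Lipschitz homeomorphism. Your conformal route is also viable and in some ways closer to the underlying machinery, but the globalization step (``produces a homeomorphism $X_0 \to X$ that is locally bi-Lipschitz with a uniform constant over the compact surface $X_0$, hence globally bi-Lipschitz'') is where the real work lies and is glossed over --- a locally bi-Lipschitz homeomorphism between compact spaces is globally bi-Lipschitz only after one has established uniformity and embeddedness of the local bi-Lipschitz charts, which is precisely what Burago's argument provides. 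The paper's citation avoids reproving this. One small further remark: your \ref{item:F} $\Rightarrow$ \ref{item:G} via Sobolev fillings and degree arguments is heavier than needed; with the paper's definition of the quadratic isoperimetric inequality (bounding the Hausdorff $2$-measure of a bounded topological disk, not the parametrized area of a Sobolev filling), locally finite measure follows immediately by enclosing each point in a short Jordan curve.
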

Let us define the terms in this theorem that are potentially less familiar or standardized. A metric space $X$ is \textit{Ahlfors $2$-regular} if there is a constant $C > 0$ such that
\[\frac{1}{C} \cdot r^2 \leq |B(x,r)|\leq C \cdot r^2 \]
for all $x \in X$ and $r \in (0, \diam(X))$. Here, $|A|$ denotes the Hausdorff $2$-measure of a set A and $B(x,r)$ is the open ball at $x$ of radius $r>0$. The space $X$ is \textit{linearly locally contractible} if  there exists $C>0$ such that every ball $B(x,r)$ of radius $r \in (0,\diam(X)/C)$ is contractible in $B(x,Cr)$. We say that the space $X$ \textit{satisfies a quadratic isoperimetric inequality} if every point has a neighborhood $V$ for which there is a constant $C>0$ such that every closed Jordan curve $\Gamma$ in $V$ bounds a topological disk $U \subset V$ satisfying $|U| \leq C \ell(\Gamma)^2$. Here $\ell(\Gamma)$ is the length of $\Gamma$.

The implication \ref{item:A} $\Longrightarrow$ \ref{item:B} is classical; see the appendix to Chapter II.1 of \cite{BH:99}. The implication \ref{item:B} $\Longrightarrow$ \ref{item:C} is a restatement of \Cref{thm:cat_implies_bic} together with the simple observation that a $\CAT(\kappa)$ surface does not have any so-called \textit{cusp points}, i.e., points having total curvature $2\pi$. Next, the implication \ref{item:C} $\Longrightarrow$ \ref{item:D} follows from a theorem of Reshetnyak that surfaces of bounded curvature without cusp points can be approximated in the bi-Lipschitz sense by polyhedral surfaces;
 see Theorems 9.10 and 9.11 in \cite{Res:93} and Lemma 6 in \cite{Bur:04}. See also Bonk--Lang \cite{BL:03} for a strong quantitative version of this property. 
The implications \ref{item:D} $\Longrightarrow$ \ref{item:E} and \ref{item:F} $\Longrightarrow$ \ref{item:G} are elementary to verify from the definitions, while the implication \ref{item:E} $\Longrightarrow$ \ref{item:F} is due to Lytchak--Wenger \cite[Corollary 5.5]{LW:20}.

The final condition \ref{item:G}, namely, having locally finite Hausdorff $2$-measure, represents the most general non-fractal metric surface. We briefly remark that fractal surfaces are a separate topic of current interest; these are generally more difficult to study and require separate methods. See for example the monograph of Bonk--Meyer \cite{BM:17}. Condition \ref{item:G} is very general; nevertheless, the basic idea of the recent papers \cite{CR:22,NR:21,NR:22} is that a geometric study of surfaces analogous to that of surface of bounded curvature can be carried out using only the assumption of locally finite Hausdorff $2$-measure. In fact, it turns out that the requirement of a length metric is not so essential and can be omitted. 

Regarding condition \ref{item:F}, we mention a beautiful theorem of Lytchak--Wenger \cite{LW:18b} that upper curvature bounds can be characterized in terms of isoperimetric inequalities. Namely, a proper length metric space is $\CAT(0)$ if and only if it satisfies the quadratic isoperimetric inequality (defined in a more general way) with the Euclidean constant $1/(4\pi)$. More generally, a proper length metric space is $\CAT(\kappa)$ if and only if its isoperimetric profile is no worse than that of the sphere of constant curvature $\kappa$. 

One aspect of the theory concerns the topic of uniformization. The classical uniformization theorem states that any smooth Riemannian $2$-manifold is conformally equivalent to a surface of constant curvature. A major theorem of Reshetnyak concerns conformal parametrizations of bounded curvature surfaces: for any surface of bounded curvature, its metric can be represented locally by a length element of the form $ds = e^{\lambda}ds_h$, where $\lambda$ is the difference of two subharmonic functions and $ds_h$ is the length element of a background smooth Riemannian metric \cite{Res:60}. The global version of this theorem is due to Huber \cite{Hub:60}. There are similar uniformization theorems for the more general classes of surfaces in \Cref{thm:omni}, including the Bonk--Kleiner theorem for Ahlfors $2$-regular, linearly locally contractible spheres \cite{BK:02}. A very general uniformization theorem applicable to any surface of locally finite Hausdorff $2$-measure, yielding a so-called \textit{weakly quasiconformal} parametrization from a constant curvature surface, has been proved in \cite{NR:21,NR:22}.

We refer the reader to the following references for various non-trivial examples of surfaces illustrating the difference between the classes of surfaces in \Cref{thm:omni}: Laakso \cite{Laa:02} for a surface satisfying \ref{item:E} but not \ref{item:D}; Theorem 1.1 in \cite{CR:20} for a surface satisfying \ref{item:F} but not \ref{item:E}; Theorem 1.2 in \cite{Rom:19} for a surface satisfying \ref{item:G} but not \ref{item:F}. In addition, we observe that $\mathbb{R}^2$ equipped with any normed metric not arising from an inner product satisfies \ref{item:D} but not \ref{item:C}. Moreover, any polyhedral surface with cone points of positive curvature satisfies \ref{item:C} but not \ref{item:B}.

Finally, we remark that there have been several more advanced investigations of finite-dimensional $\CAT(\kappa)$ spaces in recent years; see especially \cite{LN:19,LN:22}. Two-dimensional metric spaces (not necessarily surfaces) satisfying the $\CAT(\kappa)$ condition have been studied in \cite{NSY:22} and \cite{NSY:23}. 

\subsection{Outline of the paper}

The paper is organized as follows. We collect preliminaries related to metric geometry in \Cref{sec:background} and related to $\CAT(\kappa)$ spaces in \Cref{sec:cat}. The material on $\CAT(\kappa)$ spaces is probably well-known to experts but includes some propositions that we could not find in the literature. \Cref{sec:vertex_edge} describes our main tool of vertex-edge triangulations in preparation for the proof of \Cref{thm:cat_implies_bic}. The proof of \Cref{thm:cat_implies_bic} is given in \Cref{sec:bic_proofs}. 
The other results are proved in the following sections. First, we verify \Cref{prop:hausdorff_vs_alexandrov} on definitions of area and \Cref{thm:excess_curvature} relating the excess of a triangle to its area in \Cref{sec:excess_curvature}. Finally, we prove \Cref{thm:smooth_approx} on smooth approximation in \Cref{sec:smooth_approximation}.

\section*{Acknowledgments}

The research for this project was carried out as part of Stony Brook University's Research Experience for Undergraduates (REU) program during the summer of 2022. The authors thank the Stony Brook University Mathematics Department for their support. The authors also thank Fran\c cois Fillastre and Alexander Lytchak for their encouragement in completing this project and for feedback on an early draft of this paper. We also especially thank Fran\c cois Fillastre for bringing to our attention several references in the literature.

\section{Background} \label{sec:background}

\subsection{Basics of metric geometry}
We recall the basic definitions related to metric spaces. See standard references such as \cite{BH:99} and \cite{BBI:01} for more detail. Let $X$ be a set. A \textit{metric} on $X$ is a function $d \colon X \times X \to \mathbb{R}$ such that for all $x,y,z \in X$ 
\begin{enumerate}
    \item $d(x,y) = 0$ if and only if $x=y$ (positive definite),
    \item $d(x,y) = d(y,x)$ (reflexive),
    \item $d(x,y) \leq d(x,z) + d(z,y)$ (triangle inequality).
\end{enumerate}
The pair $(X,d)$ is called a \textit{metric space}. Most often, we refer to $X$ itself as a metric space with the convention that $d$ denotes the corresponding metric. We refer to $d(x,y)$ as the \textit{distance} between $x$ and $y$. Given $x \in X$ and $r>0$, we let $B(x,r)$ denote the open metric ball centered at $x$ of radius $r$. Note that this notation does not specify the metric space, though no confusion should result.

A \textit{curve} in $X$ is a continuous function from an interval to $X$. The \textit{length} of a curve $\Gamma \colon [a,b] \to X$ is
\[\ell(\Gamma) = \mathrm{sup}\sum_{k=1}^m d(\Gamma(t_{k-1}),\Gamma(t_k)),\]
where the supremum is taken over all finite subdivisions $a = t_0 < t_1 < \cdots < t_m = b$. The curve $\Gamma$ is a \textit{geodesic} if $\ell(\Gamma) = d(\Gamma(a),\Gamma(b))$. The notation $[xy]$ is used to denote a choice of geodesic from $x$ to $y$, provided that such a geodesic exists. Note that in general such a geodesic need not be unique. In a slight abuse of notation, we also identify a geodesic $[xy]$ with its image.

The metric space $X$ is a \textit{length space} if 
\[d(x,y) = \inf_{\Gamma} \ell(\Gamma)\]
for all $x,y \in X$, where the infimum is taken over all curves $ \Gamma $ connecting $x$ and $y$. The space $X$ is a \textit{geodesic space} if every pair of points $x,y$ can be joined by a geodesic $[xy]$. Observe in this case that $d(x,y) = \ell([xy])$. 

Let $(X,d)$, $(Y,d')$ be metric spaces and $L>0$. A map $f \colon X \to Y$ is \textit{$L$-Lipschitz} if
\[
d'(f(p),f(q)) \leq L\cdot d(p,q)
\]
for all $p, q \in X$. The map $f$ is \textit{$L$-bi-Lipschitz} if 
\[
\frac{1}{L}d(p,q)\leq d'(f(p),f(q)) \leq Ld(p,q)
\]
for all $p,q \in X$. 

The diameter of a set $A \subset X$ is 
\[\diam A = \sup_{x,y \in A} d(x,y). \] 
The \textit{$2$-dimensional Hausdorff $\delta$-content} of $A \subset X$ is 
\[ \mathcal{H}_{\delta}^2(A) = \mathrm{inf} \left \{ \sum_{j=1}^\infty c_2\cdot \mathrm{diam}(A_j)^2\right \},\]
where the infimum is taken over all countable collections of subsets $A_1, A_2, \ldots$ of diameter at most $\delta$ such that $A \subset \bigcup_{j=1}^\infty A_j$. The \textit{Hausdorff $2$-measure} of $A$ is
\[\mathcal{H}^2(A) = \lim_{\delta \to 0} \mathcal{H}_{\delta}^2(A).\]
Here, $c_2 = \pi/4$ is a normalization constant so that the Hausdorff $2$-measure on the plane coincides with $2$-dimensional Lebesgue measure. Accordingly, we denote the Hausdorff $2$-measure of a set $A$ by $|A|$ rather than $\mathcal{H}^2(A)$ throughout this paper. The term ``area'' in this paper refers specifically to the Hausdorff $2$-measure. 

\subsection{Polygons and triangulations}

We start with a general definition of polygon.

\begin{defi} \label{defi:polygon}
 Let $X$ be a length metric space and $n \geq 2$ an integer. A \textit{(geodesic) $n$-gon} is a figure consisting of $n$ points $v_1,\cdots,v_n$, called \textit{vertices}, together with a collection of $n$ geodesics joining them cyclically, called \textit{edges}. An $n$-gon is also called a \textit{polygon}. A polygon is \textit{simple} if it forms a simple closed curve. A $3$-gon with vertices $x,y,z$ is called a \textit{triangle} and denoted by $\triangle xyz$. A $4$-gon is called a \textit{quadrilateral}. The \textit{perimeter} of a polygon is the sum of the lengths of its edges. 
\end{defi}

The previous definition is valid for any metric space, although our focus in this paper is two-dimensional metric spaces. A \textit{metric surface} is a metric space homeomorphic to a topological surface, or $2$-manifold. Consistent with the main references such as \cite{AZ:67}, we assume that metric surfaces do not have boundary, although in principle the results of this paper should remain true for surfaces with boundary. We use the terms \textit{length surface} and \textit{geodesic surface} with the obvious meanings.

We say that a simple polygon $P$ is a \textit{Jordan polygon} if it is contained in a neighborhood $U$ homeomorphic to an open disk. 
By the Jordan curve theorem, any simple polygon $P$ is the boundary of a closed Jordan domain contained in the ambient neighborhood $U$. The interior of this domain is called the \textit{interior} of the polygon, denoted by $P^\circ$. Note that the previous fact depends on our assumption of an ambient disk neighborhood; otherwise, a simple polygon may not bound a unique disk, or any disk at all. For example, a great circle on the $2$-sphere can be viewed as a triangle, and either hemisphere may be designated as the interior. 

\begin{conv}
    When we refer to a ``Jordan polygon'' $P$, as opposed to simply a ``polygon'', we include the interior points of $P$. In turn, we use $\partial P$ to denote the set of edge points of $P$.
\end{conv}






Two Jordan polygons are \textit{non-overlapping} if they have disjoint interiors. A Jordan polygon $P \subset X$ is \textit{convex} if any two points in $P$ can be joined by a geodesic in $P$. This definition of convexity is sufficient for this paper, although we note that there are stronger notions of convexity that are essential for the study of general length surfaces. See Section 3 of \cite{CR:22} for more discussion.

We state a simple property of convexity.
\begin{prop} \label{prop:convex_intersection}
Let $P,S$ be two convex Jordan polygons in a metric surface with the property that any two points in $P \cup S$ can be joined by a unique geodesic. If $P^\circ \cap S^\circ$ is non-empty, then $P \cap S$ is a convex Jordan polygon.
\end{prop}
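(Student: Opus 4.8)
The plan is to separate the statement into the easy part — geodesic convexity of $P\cap S$ — and the hard part — that $P\cap S$ is actually a Jordan polygon.

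Geodesic convexity is immediate from the uniqueness hypothesis: given $x,y\in P\cap S$, convexity of $P$ supplies a geodesic from $x$ to $y$ lying in $P$ and convexity of $S$ one lying in $S$; since geodesics between points of $P\cup S$ are unique, these two geodesics coincide, so the geodesic $[xy]$ is contained in $P\cap S$. In particular $P\cap S$ is path-connected. The accompanying point-set facts are also quick: $P\cap S$ is compact; its interior equals $P^\circ\cap S^\circ$ (a point of $\partial P$ or $\partial S$ has arbitrarily close points outside $P$, resp. outside $S$, so it cannot lie in $(P\cap S)^\circ$), which is nonempty by hypothesis; and if $U$ is the ambient disk neighborhood of $P$ with $\diam P\le d(P,\partial U)$, then $P\cap S\subseteq P$ gives $\diam(P\cap S)\le \diam P\le d(P,\partial U)\le d(P\cap S,\partial U)$, so the ambient-neighborhood requirement is inherited. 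Thus one is reduced to showing that $\partial(P\cap S)$ is a simple closed curve built from finitely many geodesic arcs and that $P\cap S$ is the closed disk it bounds.

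For the boundary, note $\partial(P\cap S)=(\partial P\cap S)\cup(\partial S\cap P)\subseteq\partial P\cup\partial S$, a finite union of geodesic edges. The intersection of any two of these edges is connected — a single point or a common sub-arc — because two geodesics sharing two points must coincide between them, by uniqueness. Absorbing common sub-arcs into the vertex sets of $P$ and $S$, we may assume $\partial P\cap\partial S$ is finite; at each such point a local case analysis of how the two boundary curves separate a disk neighborhood, using convexity of $P$, $S$, and $P\cap S$ to exclude tangential contact and lower-dimensional spurs, shows $\partial(P\cap S)$ is locally a single arc there, while away from $\partial P\cap\partial S$ it locally agrees with $\partial P$ or with $\partial S$. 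Hence $\partial(P\cap S)$ is a compact $1$-manifold, i.e.\ a finite disjoint union of polygonal Jordan curves in $U$. Fixing $z_0\in P^\circ\cap S^\circ$, the map sending $(z,t)$ to the point dividing $[z_0z]$ in ratio $t:(1-t)$ is well defined (by convexity) and continuous (by continuous dependence of unique geodesics on their endpoints, valid since $U$ is locally compact), hence a contraction of $P\cap S$ onto $z_0$; so $P\cap S$ is contractible and therefore has connected complement in $U$. A connected set with connected complement in the disk $U$ whose boundary is a disjoint union of Jordan curves must have exactly one boundary curve $\gamma$ (otherwise the complement would be disconnected, using also that $(P\cap S)^\circ\subseteq P^\circ$ stays away from $\partial U$) and must equal the closed Jordan domain bounded by $\gamma$. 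Together with the first step, $P\cap S$ is a convex Jordan polygon.

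The main obstacle is the local analysis at the points of $\partial P\cap\partial S$, together with the companion task of ruling out enclosed holes, tangencies, and whisker-like spurs — exactly the configurations where convexity must be used beyond the trivial first step, to guarantee that the two polygon boundaries genuinely cross and that $P\cap S$ remains genuinely two-dimensional near every boundary point. Once the boundary is known to be a $1$-manifold, the remaining passage to "$P\cap S$ is a closed disk" is a routine application of the Jordan curve and Schoenflies theorems.
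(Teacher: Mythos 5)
The paper does not actually give a proof of this proposition: it records the statement and defers to Lemma~3.2 of \cite{CR:22} for ``a closely related proposition.'' So there is no in-paper argument to match yours against, and the right standard is whether your outline would close into a proof.

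The easy part is handled correctly, and you put your finger on exactly why the uniqueness hypothesis is there: for $x,y\in P\cap S$, convexity of $P$ gives a geodesic in $P$, convexity of $S$ gives one in $S$, and uniqueness forces them to coincide. Likewise $(P\cap S)^\circ=P^\circ\cap S^\circ$, the observation that two edges meeting in more than one point share the whole subarc between (uniqueness again), and the resulting reduction to finitely many crossing points are all sound and would survive in a fully written-out proof. The genuine gap is precisely the one you yourself label ``the main obstacle'': the local analysis at the crossing points of $\partial P$ and $\partial S$, together with ruling out whiskers, tangencies, and enclosed holes. You assert the conclusion of that analysis without carrying it out, and it is the actual content of the proposition rather than a routine verification; without it you have not shown $\partial(P\cap S)$ is a $1$-manifold, and the rest of the topology does not get off the ground. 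A secondary soft spot is the inference ``contractible compact set in a disk has connected complement'': once you know $P\cap S$ is a closed Jordan domain this is trivial, but you are using it \emph{before} you know that, so you are implicitly invoking an Alexander-duality-type statement for \v{C}ech cohomology that would need to be made explicit.

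One way to repair the outline, and in fact to shortcut the boundary case analysis entirely, is to push the retraction idea further: fix $z_0\in P^\circ\cap S^\circ$ and prove the stronger lemma that for every $x\in P\cap S$ the geodesic $[z_0x]$ lies in $(P\cap S)^\circ$ except possibly at $x$. This follows by sweeping geodesics $[wx]$ for $w$ in a small ball about $z_0$ (contained in $P^\circ\cap S^\circ$) and using the continuous dependence of unique geodesics on endpoints that you already invoke. With that lemma, $P\cap S$ is the closure of its interior, ruling out whiskers and tangential spurs in one stroke, and the radial map $(z,t)\mapsto[z_0z](t)$ is seen to be a homeomorphism from the cone on $\partial(P\cap S)$, which identifies $P\cap S$ with a closed disk directly and makes the separate $1$-manifold and Alexander-duality steps unnecessary. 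This is in spirit the route taken in \cite{CR:22}, and it is more economical than the pointwise boundary classification you sketch.
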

See Lemma 3.2 in \cite{CR:22} for a closely related proposition. Note that the conclusion may fail without the unique geodesic property.

Our proofs make use of covers of a surface by non-overlapping triangles, which we call \textit{triangulations}. Our definition is somewhat different than the use of the term in general topology, since we do not require adjacent triangles to intersect along entire edges. 

\begin{defi}
    A \textit{triangulation} of a subset $K \subset X$ is a locally finite collection of mutually non-overlapping convex Jordan triangles $\mathcal{T}=\{T_i\}$ such that $\bigcup_i T_i=K$. 
    If the subset $K$ itself is a polygon, then the vertices of $K$ are called the \textit{original vertices} of the triangulation $\mathcal{T}$. The \textit{mesh} of $\mathcal{T}$ is $\sup_{T \in \mathcal{T}} \diam T$.
\end{defi}

The union of the set of edges of a triangulation $\mathcal{T}$ form an embedded topological graph on the surface, called the \textit{edge graph} of $\mathcal{T}$ and denoted by $\mathcal{E}$. The edge graph can be considered as a length space by giving it the length metric induced by the metric on $X$.

A general theorem on the existence of triangulations for arbitrary length surfaces was proved in \cite{CR:22}. We state a version that covers our purposes.

\begin{thm} \label{thm:triangulation}
Let $X$ be a length surface and $\mathcal{U}$ a cover of $X$ by open sets. For all $\varepsilon>0$ there exists a triangulation $\mathcal{T}$ of $X$ with mesh at most $\varepsilon$ such that each element $T\in \mathcal{T}$ is contained in a set $U \in \mathcal{U}$ satisfying $d(T,\partial U) > \diam T$. Moreover, if $\mathcal{E}$ is the edge graph of $\mathcal{T}$ with the induced length metric, then the inclusion of $\mathcal{E}$ into $X$ is an $\varepsilon$-isometry. 

The same result holds for surfaces with boundary provided the boundary is piecewise geodesic.
\end{thm}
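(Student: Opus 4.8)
The plan is to reduce the global statement to a local construction and then assemble it, following \cite{CR:22}; since the theorem is quoted from there, I only outline the argument. First I would produce a fine auxiliary topological skeleton: by the classical triangulability of surfaces (Rad\'o), iterated barycentric subdivision, and a Lebesgue-number argument relative to $\mathcal{U}$, fix a locally finite topological triangulation $\mathcal{T}_0$ of $X$ whose closed cells have arbitrarily small diameter and whose closed vertex stars each lie in some $U\in\mathcal{U}$ with a definite gap to $\partial U$. This records an embedded graph $G_0\subset X$ together with the combinatorial pattern of a decomposition into topological triangles.

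Next I would straighten and refine. Replace each topological edge $e$ of $\mathcal{T}_0$ joining $v$ to $w$ by a shortest curve among all curves from $v$ to $w$ homotopic to $e$ rel endpoints within a small regular neighborhood of $e$. Such minimizers exist because everything happens inside a precompact topological disk; they are short; and after an arbitrarily small perturbation removing incidental overlaps they are pairwise non-crossing, so they assemble into an embedded graph $G_1$ whose complementary regions are Jordan polygons that are \emph{boundary convex} (their boundaries cannot be shortcut from the outside, which is exactly the minimality of the bounding curves). Then cut each such polygon into triangles by iteratively inserting diagonals that are again length minimizers within the polygon in the relevant homotopy class; boundary convexity passes to the sub-polygons, so the subdivision can be repeated until every piece has diameter $<\varepsilon$. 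A small boundary-convex Jordan triangle lying in a precompact disk is automatically convex---a length-minimizing curve between two of its points, after its excursions outside are traded for boundary subarcs, lies inside and still realizes the distance---so one obtains the triangulation $\mathcal{T}$ with edge graph $\mathcal{E}$, and mesh control and subordination to $\mathcal{U}$ survive because curves only ever get shorter.

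It then remains to check that the inclusion $\mathcal{E}\hookrightarrow X$ is an $\varepsilon$-isometry. Density is automatic: every $x\in X$ lies in some $T\in\mathcal{T}$ with $\diam T<\varepsilon$, hence within $\varepsilon$ of $\partial T\subset\mathcal{E}$. Since the induced length metric on $\mathcal{E}$ dominates $d_X$, what remains is the bound $d_{\mathcal{E}}(a,b)\le d_X(a,b)+\varepsilon$ for $a,b\in\mathcal{E}$. For this, take a near-minimizing curve $\gamma$ in $X$ from $a$ to $b$; its crossing points with $\mathcal{E}$ cut it into sub-arcs, each running through a single triangle from one boundary point to another, and reroute each such sub-arc along that triangle's boundary. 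Boundary convexity bounds the length gained inside each triangle, and by taking the mesh---hence all perimeters---small one arranges the accumulated gain to stay below $\varepsilon$.

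The main obstacle is twofold. First, guaranteeing that the straightened edges and inserted diagonals stay embedded and genuinely bound \emph{convex} rather than merely boundary-convex triangles is what forces the iterated-subdivision scheme and the systematic use of length minimization and boundary convexity in place of naive geodesics. Second, and more seriously, the length bookkeeping of the last step---that pushing a near-geodesic onto $\mathcal{E}$ costs strictly less than $\varepsilon$ in total---is delicate, because without any curvature bound there is no a priori control on how a shortest curve threads through the triangulation; this is also the reason one confines the whole construction to precompact disk neighborhoods, where the length minimizers used throughout are guaranteed to exist.
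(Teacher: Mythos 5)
The paper does not prove this theorem; it imports it wholesale, citing \cite{CR:22} for the existence of the triangulation (with the cover refinement coming from Remark 5.5 there) and Proposition 5.2 of \cite{NR:21} for the $\varepsilon$-isometry of the edge graph. So there is no in-paper proof to compare against. Taking your outline on its own terms, the overall skeleton---topological triangulation via Rad\'o, straightening edges by minimizing length in the appropriate relative homotopy class inside precompact disk charts, preserving boundary convexity under subdivision, and observing that small boundary-convex Jordan triangles are convex---is the right shape of the argument and does match the strategy of \cite{CR:22}.

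However, the last step has a genuine gap. Boundary convexity of a polygon $P$, as used in \cite{CR:22}, controls curves in $U\setminus P^\circ$ with endpoints on $\partial P$: the boundary subarc onto which such an \emph{exterior} curve homotopes is no longer. It says nothing about a chord running through the \emph{interior} of a triangle. When you reroute a near-geodesic $\gamma$ triangle by triangle onto $\mathcal{E}$, each interior chord of a triangle $T$ gets replaced by an arc of $\partial T$; the gain is bounded only by the perimeter of $T$, and since a very short chord across a thin triangle can be replaced by a much longer boundary arc, the gain is not bounded by a multiple of the chord length. Without a curvature bound there is also no a priori control on how many triangles $\gamma$ crosses, so shrinking the mesh does not cap the accumulated gain---fine meshes make each individual gain small but allow many more crossings. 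This is exactly the delicacy you flag in your last paragraph, but as written the proposal does not resolve it; one needs an argument of a different kind (as in \cite{NR:21}) rather than a per-triangle rerouting estimate.

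A secondary, smaller concern: after straightening edges you say one may remove incidental overlaps by ``an arbitrarily small perturbation.'' Minimizers in a general length space cannot simply be perturbed and remain minimizers (or even remain short), and two homotopy-class minimizers can genuinely share arcs. In \cite{CR:22} this is handled not by perturbation but by a careful selection rule for the minimizers (and a careful bookkeeping of which arcs may coincide), which is one of the more technical points there; it deserves more than a wave of the hand.
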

A map $f \colon X \to Y$ between metric spaces $(X,d)$ and $(Y,d')$ is an \textit{$\varepsilon$-isometry} if 
\[d(x,y) - \varepsilon \leq d'(f(x),f(y)) \leq d(x,y) + \varepsilon\] 
for all $x,y \in X$ and the image of $f$ is $\varepsilon$-dense in $Y$ (that is, every point in $y$ is at distance at most $\varepsilon$ from a point in $f(X)$).  

The statements concerning the open cover $\mathcal{U}$ is not part of the triangulation theorem as stated in Theorem 1.2 of \cite{CR:22}, although it follows from Remark 5.5 in \cite{CR:22}. Also, the triangulation theorem in \cite{CR:22} gives the stronger property of \textit{boundary convexity} rather than just convexity, though this is not needed for our purposes. The final conclusion regarding the edge graph can be found in Proposition 5.2 in \cite{NR:21}. 

\section{Preliminaries on $\CAT(\kappa)$ spaces} \label{sec:cat}

\subsection{Basic definitions}
In this section, we give a brief overview of $\CAT(\kappa)$ surfaces and collect various facts that are needed later. We refer the reader to Bridson--Haefliger \cite{BH:99} for further details.

For each $\kappa \in \mathbb{R}$, we let $M_\kappa^2$ denote the $2$-dimensional model space of constant curvature $\kappa$. More precisely, these are defined by letting $M_0^2$ be the Euclidean plane, $M_1^2$ be the $2$-sphere, and $M_{-1}^2$ be the hyperbolic plane. For $\kappa>0$, $M_{\pm \kappa}^2$ is the rescaling of $M_{\pm 1}^2$ by the factor $\sqrt{\kappa}$.  We let $\bar{d}_\kappa$ denote the metric on $M_\kappa^2$ and $D_\kappa$ denote the diameter of $M_\kappa^2$; that is, $D_\kappa = \pi/\sqrt{\kappa}$ if $\kappa>0$ and $D_\kappa = \infty$ otherwise. It is often convenient to think of each model space as a conformal deformation of the Euclidean metric, that is, a smooth Riemannian metric obtained by rescaling pointwise the Euclidean metric on a domain in $\mathbb{R}^2$. Let $ds_\kappa$ denote the corresponding length element for $M_\kappa^2$; then $ds_\kappa$ is given by the formula
\[ds_\kappa^2 =  \frac{4(dx^2+dy^2)}{(1+\kappa(x^2+y^2))^2}\]
defined for all $(x,y) \in \mathbb{R}^2$, with the additional requirement that $x^2 + y^2 < 1/|\kappa|$ if $\kappa<0$. If $\kappa=0$, this formula gives a rescaling of the usual Euclidean metric, but the resulting metric space is of course isometric to the Euclidean plane. If $\kappa>0$, then we must add in the point $\infty$ to obtain the complete $2$-sphere. 

    Next, let $X$ be a metric space and fix $\kappa \in \mathbb{R}$. Given a triangle $T=\triangle pqr\subset X$ with perimeter at most $2D_\kappa$, a \textit{comparison triangle in $M^2_\kappa$} for $T$ is a triangle in $M^2_\kappa$ with edges of matching lengths. We denote the comparison triangle of $T$ by $\bar{T}_\kappa$. More precisely, let $\bar{p}, \bar{q}, \bar{r} \in M_\kappa^2$ be points satisfying $d(p,q) = \bar{d}_\kappa(\bar{p},\bar{q})$, $d(p,r) = \bar{d}_\kappa(\bar{p},\bar{r})$ and $d(q,r) = \bar{d}_\kappa(\bar{q},\bar{r})$. Then we define $\bar{T}_\kappa = \triangle \bar{p}\bar{q}\bar{r}$. This triangle is unique up to an isometry of $M^2_\kappa$.  
    
    Given a point $x \in [pq]$, we let $\bar{x}$ denote the point on $[\bar{p}\bar{q}]$ such that $d(p,x)=\bar{d}_{\kappa}(\bar{p},\bar{x})$ and $d(x,q)=\bar{d}_{\kappa}(\bar{x},\bar{q})$. A triangle $T=\triangle pqr\subset X$ with perimeter at most $2D_\kappa$ is said to \textit{satisfy the $\CAT(\kappa$) inequality} if 
    \[d(x,y)\leq \bar{d}_{\kappa}(\bar{x},\bar{y})\]
    for all $x,y$ in $\partial T$. A length metric space $X$ is a \textit{$\CAT(\kappa)$ space} if any two points in $X$ at distance less than $D_\kappa$ are joined by a geodesic and every triangle $T \subset X$ with perimeter less than $2D_\kappa$ satisfies the CAT($\kappa$) inequality. We say that a length surface $X$ is a \textit{locally $\CAT(\kappa)$} surface if every point $x \in X$ has a neighborhood $U$ that is a $\CAT(\kappa)$ space as previously defined. 
    For conciseness, we typically omit the word \textit{locally} and refer to such a space as a \textit{$\CAT(\kappa)$ surface}. 

    \subsection{Angles}
    Let $X$ be an arbitrary metric space. Given a triangle $T = \triangle pqr\subset X$, the \textit{model angle at $p$}, denoted $\bar{\angle}_{p}^\kappa(q,r)$, is the angle between the geodesics $[\bar{p}\bar{q}]$ and $[\bar{p}\bar{r}]$ in the comparison triangle $\triangle\bar{p}\bar{q}\bar{r}$ in $M^2_\kappa$. This is defined provided that the perimeter of $T$ is at most $2D_\kappa$. In a $\CAT(\kappa)$ space, the model angle satisfies the triangle inequality: $\bar{\angle}_{p}^\kappa(q,r) \leq \bar{\angle}_{p}^\kappa(q,s)+\bar{\angle}_{p}^\kappa(s,r)$ for all $p,q,r,s$, provided that all these angles are defined. 
    
    For any metric space $X$, the \textit{(Alexandrov) upper angle} between geodesics $[pq]$ and $[pr]$ is
    \begin{equation} \label{equ:upper_angle}
        \angle_p(q,r)=\limsup_{q',r' \to p} \bar{\angle}_{p}^0 (q',r')    
    \end{equation}
    where $q'\in [pq] \setminus \{p\}$ and $r'\in [pr]\setminus \{p\}$. In general, the upper angle satisfies the triangle inequality: $\angle_{p}(q,r) \leq \angle_{p}(q,s)+\angle_{p}(s,r)$ for all points $p,q,r,s$ such that the geodesics $[pq], [pr], [ps]$ exist. If the ``$\limsup$'' in \eqref{equ:upper_angle} is an actual limit, then $\angle_p(q,r)$ is called the \textit{angle} between $[pq]$ and $[pr]$. If $X$ is a $\CAT(\kappa)$ space, the angle between adjacent sides of any geodesic triangle of perimeter less than $2D_\kappa$ always exists and is less than or equal to the corresponding model angle. 
    
    Let $T$ be a triangle with angles $\alpha, \beta, \gamma$. 
    The \textit{angle excess} or \textit{excess} of $T$ is $\delta(T) = \alpha + \beta + \gamma -\pi$. If $\mathcal{T}$ is a finite family of geodesic triangles, the \textit{excess} of $\mathcal{T}$ is $\delta(\mathcal{T}) = \sum_i \delta(T_i)$.
    In connection with excess, we recall the classical Girard's theorem, which states that the area of a triangle in the $2$-sphere $M_1^2$ is precisely equal to its excess. More generally, if $T$ is a triangle in the model space $M_\kappa^2$ for any $\kappa \in \mathbb{R}$, then $\kappa |T| = \delta(T)$. 
    
    \subsection{Facts about $\CAT(\kappa)$ spaces} \label{sec:cat_facts}
    
    We record some properties of $\CAT(\kappa)$ spaces. The first concerns uniqueness of geodesics. If two points $x,y \in X$ satisfying $d(x,y) < D_\kappa$ are joined by a geodesic, then that geodesic is unique. The existence of a geodesic between sufficiently close points is a consequence of the Hopf--Rinow theorem; see Proposition I.3.7 in \cite{BH:99}. Suppose that $x,y$ belong to a neighborhood $U \subset X$ homeomorphic to a closed disk. If $d(x,y) \leq d(x,\partial U)$, then there is a geodesic in $X$ from $x$ to $y$. Moreover, the ball $B(x,r)$ is convex and contractible for all $r< \min\{d(x,\partial U)/2, D_\kappa/2\}$. See Proposition II.1.4 in \cite{BH:99}.

\begin{lemm}
\label{lemm:extend}
Let $X$ be a $\CAT(\kappa)$ space that is also a topological manifold. Then every geodesic can be extended indefinitely in both directions as a local geodesic. It is length-minimizing for all pairs of points at distance at most $D_\kappa$.  
\end{lemm}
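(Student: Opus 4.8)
The plan is to isolate the local statement that a geodesic can be extended past its endpoint by a definite positive amount, and then to iterate. Reversing orientation, it suffices to extend a geodesic $\gamma\colon[0,\ell]\to X$ past $p=\gamma(\ell)$. First I would pass to a neighborhood $U$ of $p$ in which, as recorded above, the $\CAT(\kappa)$ inequality holds, any two points at distance less than $D_\kappa$ are joined by a unique geodesic depending continuously on its endpoints, and all sufficiently small closed balls are convex. Since a topological manifold is locally compact, $\overline{B(p,\rho)}$ is compact for $\rho$ small; together with convexity and contractibility this makes $\overline{B(p,\rho)}$ a closed topological disk whose boundary is the metric sphere $S:=\partial B(p,\rho)$, a topological circle. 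I then pick $x=\gamma(\ell-\varepsilon)$ with $0<\varepsilon<\rho$, so that $x\in B(p,\rho)\setminus\{p\}$.

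The crux is to show that $p\in[xq]$ for at least one $q\in S$; here $[xq]$ denotes the unique geodesic from $x$ to $q$, which lies in $\overline{B(p,\rho)}$ by convexity. Granting this, for such a $q$ the geodesic $[xq]$ decomposes as $[xp]\cup[pq]$; since $[xp]$ equals $\gamma|_{[\ell-\varepsilon,\ell]}$ by uniqueness, the concatenation $\gamma\cup[pq]$ is locally minimizing at every point, hence a local geodesic, and it extends $\gamma$ past $p$ by the length $\rho$. To prove the crux, suppose instead that $p\notin[xq]$ for every $q\in S$. Then
\[\Phi\colon S\times[0,1]\longrightarrow\overline{B(p,\rho)}\setminus\{p\},\qquad \Phi(q,s)=[xq]\bigl(s\,d(x,q)\bigr),\]
is well defined---it avoids $p$ by assumption and stays in $\overline{B(p,\rho)}$ by convexity---and continuous, by continuous dependence and uniqueness of geodesics. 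Since $\Phi(\cdot,0)\equiv x$ and $\Phi(\cdot,1)=\mathrm{id}_S$, the inclusion $S\hookrightarrow\overline{B(p,\rho)}\setminus\{p\}$ is freely homotopic to a constant map. But $\overline{B(p,\rho)}\setminus\{p\}$ is a closed disk minus an interior point, which deformation retracts onto $S$; hence this inclusion generates $\pi_1(\overline{B(p,\rho)}\setminus\{p\})\cong\mathbb{Z}$ and in particular is not null-homotopic---a contradiction.

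Iterating the extension step past successive endpoints encounters no local obstruction, and taking the union of all such extensions yields a maximal local geodesic $\sigma\colon I\to X$; when $X$ is complete one gets $I=\mathbb{R}$, since a finite-length local geodesic is Cauchy and so a finite endpoint of $I$ would be an honest point of $X$ past which $\sigma$ could be extended further. For the final assertion I would invoke the standard fact \cite{BH:99} that in a $\CAT(\kappa)$ space every local geodesic of length less than $D_\kappa$ is a globally minimizing geodesic; applied to the subarcs of $\sigma$, this shows that $\sigma$ restricts to a minimizing geodesic on every parameter interval of length less than $D_\kappa$.

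I expect the main obstacle to be the topological bookkeeping in the first paragraph---verifying that a small closed metric ball in a $\CAT(\kappa)$ surface is a closed disk with the metric sphere as boundary circle. One may instead enclose $p$ in a sufficiently small convex Jordan polygon, which is automatically a closed disk by the Jordan--Schoenflies theorem, trading that point for the verification that small polygons in a $\CAT(\kappa)$ surface can be taken convex. Once the right enclosing disk is fixed, the remaining homotopy argument and the appeal to the already-recorded facts about $\CAT(\kappa)$ geodesics are routine.
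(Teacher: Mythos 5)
The paper proves this lemma purely by citation to Proposition II.5.12 of Bridson--Haefliger \cite{BH:99}, so any actual argument you supply is necessarily a different route; what you have written is a correct, self-contained proof in the two-dimensional case (the only case the paper actually uses). Your core mechanism --- the map $\Phi(q,s)=[xq]\bigl(s\,d(x,q)\bigr)$ producing a null-homotopy of the boundary circle in a punctured disk when no geodesic from $x$ through $p$ exits --- is the $\pi_1$-version of the local-homology/degree argument that underlies the cited Bridson--Haefliger result, and it is the right idea. The one real gap, which you yourself flag, is the claim that $\overline{B(p,\rho)}$ is a closed disk whose boundary is the metric sphere: this is not elementary, since one does not a priori know $\partial B(p,\rho)$ is a circle without something like geodesic extendability (which is what is being proved). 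Your proposed fix --- enclosing $p$ in the interior of a small convex Jordan polygon $P$ and running the homotopy in $P\setminus\{p\}$, which \emph{is} a punctured disk --- is exactly the right repair and costs nothing here, since the paper already records (in \Cref{sec:cat_facts}, citing Lemma 5.1 of \cite{CR:22}) that such polygonal neighborhoods exist. Two smaller remarks. First, the argument as written is genuinely two-dimensional (it uses $\pi_1$), whereas the lemma is stated for an arbitrary topological manifold; this is harmless for the paper, and the argument generalizes by replacing $\pi_1$ with $\pi_{n-1}$ of a punctured $n$-ball, but the mismatch with the stated generality is worth noting. Second, ``extended indefinitely'' tacitly requires completeness (your iteration makes this visible when the extension increment $\rho$ may shrink along the way); the paper's applications only ever need extension until a compact set is exited, so this is a wrinkle in the lemma's statement rather than a flaw in your argument, but you are right to call it out.
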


See Proposition II.$5.12$ in \cite{BH:99} for a proof. We can use geodesic extendability to show to following.

\begin{cor} \label{cor:extension}
Let $X$ be a $\CAT(\kappa)$ space that is also a topological surface. Let $T=\triangle pqr$ be a Jordan triangle in $X$ of diameter less than $D_\kappa$ and let $x$ be an interior point. Then the geodesic $[px]$ can be extended to intersect the opposite side $[qr]$.
\end{cor}
\begin{proof}
By \Cref{lemm:extend}, $[px]$ can be extended indefinitely past the point $x$. Let $\Gamma$ denote a maximal extension past $x$, which we assume is parametrized by arc length. We claim that $\Gamma$ must leave the set $T$. Assume this is not the case. If $\Gamma$ has finite length, then its domain must be an interval of the form $[a,b)$. Since $\Gamma$ is $1$-Lipschitz and $T$ is compact, $\lim_{t \to b} \Gamma(t)$ must exist and lie in $T$. If we set $\Gamma(b) = \lim_{t \to b} \Gamma(t)$, then by continuity $\Gamma$ retains the property of being a geodesic. This contradicts the maximality of $\Gamma$. Hence $\Gamma$ must have infinite length. However, since $T$ has diameter at most $D_\kappa$, it must be the case that $\Gamma$ is length minimizing between all pairs of points it contains. Again since $T$ has diameter at most $D_\kappa$, this contradicts the statement that $\Gamma$ has infinite length. 
We conclude that $\Gamma$ intersects $\partial T$ at some point other than $p$. This point must lie on the side $[yz]$ since otherwise the uniqueness of geodesic is violated.
\end{proof}

Likewise, small Jordan triangles in $\CAT(\kappa)$ surfaces are convex:
\begin{lemm} \label{lemm:convexity}
    Let $X$ be a $\CAT(\kappa)$ space that is also a topological surface, and let $T$ be a Jordan triangle in $X$. Assume further that $T$ is contained in a neighborhood $U$ homeomorphic to a closed disk with $d(T,\partial U) \geq \ell(\partial T)$ and $\text{diam}(U) < D_\kappa$. Then $T$ is convex.
\end{lemm}
\begin{proof}
Consider two points $x,y \in \partial T$. Observe that $d(x,y) < \ell(\partial T)$ and that $B(x, \ell(\partial T)) \subset U$. This implies that there is a geodesic $\Gamma$ from $x$ to $y$ in $U$, which moreover is unique since $\diam(U) < D_\kappa$. We must show that $\Gamma$ is contained in $T$. If $x,y$ belong to the same side of $T$, then $\Gamma$ is just a subarc of this side and we are done. Assume then that $x$ belongs to the side $A$ and $y$ belongs to the side $B$, with $C$ the other side of $T$. By arguing as in \Cref{cor:extension}, the geodesic $A$ can be extended in each direction until reaching the boundary $\partial U$. Since $\diam(U) < D_\kappa$, this extension is also a geodesic, which we still denote by $A$. This extension divides $U$ into two non-overlapping closed disks $U_A^1$ and $U_A^2$, where $T \subset U_A^1$. Do the same for $B$ to obtain two non-overlapping closed disks $U_B^1$ and $U_B^2$, with $T$ contained in $U_B^1$, and likewise for $C$. By the uniqueness of geodesics, $\Gamma$ cannot cross the geodesic $A$, and hence $\Gamma$ must be contained in the set $U_A^1$. Likewise, $\Gamma$ must be contained in the sets $U_B^1$ and $U_C^1$. Since $T = U_A^1 \cap U_B^1 \cap U_C^1$, we deduce that $\Gamma$ lies in $T$. Thus $T$ is convex.
\end{proof}

\subsection{Surfaces of bounded curvature} \label{sec:surfaces_bounded_curvature}

In this section, we give a brief overview of the theory of surfaces of bounded curvature. See the monographs \cite{AZ:67} and \cite{Res:93} for a detailed treatment, as well as \cite{Tro:22} for a shorter survey. 
The fundamental definition in the theory as developed by Alexandrov--Zalgaller is the following. 

\begin{defi}
A length surface $X$ has \textit{bounded curvature} if for each $x \in X$ there is a neighborhood $U$ containing $x$ and a constant $C>0$ such that $\delta(\mathcal{T}) \leq C$ for all finite collections $\mathcal{T}$ of mutually non-overlapping Jordan triangles contained in $U$.
\end{defi}

The main method of proof used by Alexandrov--Zalgaller is polyhedral approximation. For a flat polyhedral surface $Y$ (i.e., a polyhedral surface where each face is a Euclidean polygon), one can define the total curvature at a vertex to be $2\pi$ minus the total angle at that vertex. One can then define the curvature measure $\omega$ on $Y$ as a signed Radon measure by defining $\omega(A)$ to be the sum of the total curvatures of all vertices in $A$. The measure $\omega$ in turn can be split into a positive part $\omega^+$ and negative part $\omega^-$, where $\omega^+$ and $\omega^-$ are non-signed measures satisfying $\omega = \omega^+ - \omega^-$. Suppose that $(X,d)$ is a length surface and $(d_n)$ is a sequence of polyhedral metrics on $X$ converging uniformly to $d$, with $\omega_n$ the corresponding curvature measures. We say that the sequence of metrics $d_n$ has \textit{locally uniformly bounded curvature} if for any compact set $A \subset X$ both $\omega_n^+(A)$ and $\omega_n^-(A)$ are uniformly bounded in $n$. 

Surfaces of bounded curvature can be defined equivalently in terms of a sequence of polyhedral approximations. Namely, a length surface $(X,d)$ has bounded curvature if and only if there is a sequence of polyhedral metrics $d_n$ on $X$ with locally uniformly bounded curvature that converges uniformly to $d$. The same statement is true with ``polyhedral metric'' replaced by ``Riemannian metric''; in this case, the curvature measure is defined by integrating the Gaussian curvature with respect to area. Surfaces of bounded curvature can also be characterized in terms of the existence of conformal parametrizations due to Reshetnyak, which we discussed briefly in \Cref{sec:bigger}.

The characterization as limits of polyhedral surfaces enables one to define area and the curvature measure on a surface of bounded curvature by taking weak limits. We call the notion of area obtained this way the \textit{Alexandrov area} and denote the area of a set $A$ by $\Area(A)$. More precisely, let $(d_n)$ be a sequence of polyhedral metrics on $X$ with locally uniformly bounded curvature converging uniformly to a metric $d$, with $\varphi_n$ denoting the transition map from $(X,d_n)$ to $(X,d)$ and $\omega_n$ the curvature measure for $d_n$. For all Borel sets $A \subset X$ we define $\Area(A) = \lim_{n\to \infty} |\varphi_n^{-1}(A)|$. Likewise, we define $\omega(A) = \lim_{n\to \infty} \omega_n(\varphi_n^{-1}(A))$ It can be shown that these are independent of the choice of approximating surface and hence well-defined; see Section VIII.3 of \cite{AZ:67}. 

\section{Vertex-Edge Triangulations} \label{sec:vertex_edge}

In this section, we define vertex-edge triangulations and establish their main properties. Throughout this section, $X$ is a $\CAT(\kappa)$ surface. 

All polygons we consider for the remainder of the paper are Jordan polygons, so we drop the word ``Jordan'' and just use the term ``polygon''. We recall our convention that interior points are considered part of a (Jordan) polygon.

First, we formally state the relevant definitions.

\begin{defi}
    Let $P \subset X$ be a convex polygon. A \textit{vertex-edge partition} of $P$ is a cover of $P$ by non-overlapping convex polygons that can be obtained by the following inductive process. First, $\mathcal{T}_0 = \{P\}$ itself is a vertex-edge partition.
    \begin{itemize}[leftmargin=5.mm]
        \item[] \textit{Vertex-edge subdivision.} Let $\mathcal{T}$ be a vertex-edge partition. Given a polygon $T \in \mathcal{T}$, we subdivide it by connecting one of its vertices to a point on an edge not containing that vertex by a geodesic in $T$ that intersects $\partial T$ only at the endpoints. This produces two non-overlapping convex polygons $T_1$ and $T_2$ whose union is $T$. We define a new vertex-edge partition $\mathcal{T}'$ by replacing $T$ with the two polygons $T_1,T_2$.   
    \end{itemize}
    If a vertex-edge partition $\mathcal{T}$ of $P$ consists only of triangles, then we call it a \textit{vertex-edge triangulation} of $P$.
\end{defi}

We make the following useful observation: if $P$ is a triangle, then every intermediate polygon appearing in the vertex-edge subdivision is also a triangle.

\begin{defi} 
    Suppose that $\mathcal{A}$ is a family of non-overlapping convex polygons contained in a polygon $P$. A \textit{vertex-edge refinement} of $\mathcal{A}$ with respect to $P$ is a triangulation $\mathcal{T}$ with the following properties:
    \begin{enumerate}
    \item $\mathcal{T}$ is a vertex-edge triangulation of $P$;
    \item Each polygon $A \in \mathcal{A}$ is the union of triangles in $\mathcal{T}$.
    \item For each $A \in \mathcal{A}$, the set $\mathcal{T}_A = \{T \in \mathcal{T}: T \subset A\}$ is itself a vertex-edge triangulation of $A$. 
\end{enumerate}
\end{defi} 

The point of this definition is that $\mathcal{T}$ is simultaneously a vertex-edge triangulation of the original triangle $T$ and each of the polygons in $\mathcal{A}$. 

\subsection{Existence of vertex-edge refinements}

First, we have a lemma about recognizing vertex-edge triangulations. The situation in this lemma is illustrated in \Cref{fig:recognizing}.
\begin{lemm} \label{lemm:recognize_ve_partition}
    Let $P$ be a convex polygon of perimeter less than $2D_\kappa$ that is a $\CAT(\kappa)$ space. Let $J_i$ ($i=1,\ldots,n-1$) be a set of non-crossing geodesic arcs in $P$ that intersect $\partial P$ only at the endpoints and divide $P$ into $n$ non-overlapping polygons $P_1, \ldots, P_n$. Assume that each $P_i$ is a triangle or quadrilateral whose edges are subarcs of $J_{i-1}$ and $J_i$ (when these are defined) and $\partial P$. Moreover, if $P_i$ is a quadrilateral, then we assume that it does not intersect $P_{i-1}$ and $P_{i+1}$ along adjacent edges of $P_i$. Finally, we require each original vertex of $P$ to be a vertex of one of the $P_i$. We form a triangulation $\mathcal{T}$ of $P$ by dividing each $P_i$ that is a quadrilateral into triangles by connecting two non-adjacent vertices with a geodesic. Then $\mathcal{T}$ is a vertex-edge triangulation of $P$.   
\end{lemm}

\begin{figure} 
    \centering

        \begin{tikzpicture}
        \node () at (0,0) {\includegraphics[height=1.75in]{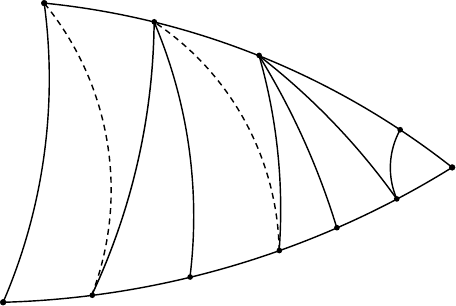}};
        \node () at (-2.5,-1.1) {$P_1$};
        \node () at (-1.1,-1) {$P_2$};
        \node () at (-.1,-.9) {$P_3$};
        \node () at (1.1,-.7) {$P_4$};
        \node () at (1.8,-.5) {$P_5$};
        \node () at (2.1,.2) {$P_6$};
        \node () at (2.7,-.2) {$P_7$};
        \end{tikzpicture}
    \caption{A polygon as in \Cref{lemm:recognize_ve_partition}}
    \label{fig:recognizing}
\end{figure}
\begin{proof}
Observe first that the perimeter assumption on $P$ implies that $P$ has at least three original vertices.

The lemma follows by induction on $n$, the number of subpolygons comprising $P$. It is clear that the conclusion holds if $n=1$. Assume next that the conclusion holds whenever $n \leq N-1$ for some integer $N \geq 2$. Let $P$ be a polygon comprised of $n=N$ subpolygons as in the statement of the lemma. In the first case, suppose that $P$ has an original vertex that is a vertex of a polygon $P_i$ for some $i \in \{2, \ldots, n-1\}$. This vertex must be contained in a common edge of $P_i$ with $P_{i-1}$ or $P_{i+1}$. Subdivide $P$ along this common edge, giving two subpolygons handled by the inductive hypothesis. 

Otherwise, either $P_1$ or $P_n$ is a quadrilateral containing two original vertices of $P$ not in common with $P_2$ or $P_{n-1}$, respectively. Say that $P_1$ has this property. Let $e$ be the edge of $\mathcal{T}$ containing one of these original vertices and crossing through $P$; then $e$ must connect one original vertex of $P$ and one vertex of the common edge with $P_2$. So we subdivide $P$ first along $e$, then along the common edge with $P_2$. This divides $P$ into the two subtriangles of $P_1$ and the polygon $P' = P_2 \cup \cdots \cup P_n$, which is handled by the inductive hypothesis.
\end{proof}

The main result of this section is the following.

\begin{thm} \label{thm:ve_refinement}
If $T$ is a convex triangle of perimeter less than $2D_\kappa$ that is a $\CAT(\kappa)$ space, then any finite family $\mathcal{A}$ of non-overlapping convex polygons contained in $T$ has a vertex-edge refinement with respect to $T$. 
\end{thm}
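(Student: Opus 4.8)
The plan is to prove \Cref{thm:ve_refinement} by strong induction on the number $n$ of polygons in the family $\mathcal{A}$. The base case $n=0$ is handled by observing that any vertex-edge triangulation of $T$ works; such a triangulation exists by repeatedly applying vertex-edge subdivision (cutting from a vertex to an interior point of the opposite side), which is possible since $T$ is convex and, by \Cref{lemm:convexity} together with the geodesic uniqueness and extendability results in \Cref{sec:cat_facts}, every subtriangle that arises is again convex with perimeter at most that of $T$. For the inductive step, suppose the result holds for families of fewer than $n$ polygons and let $\mathcal{A}=\{A_1,\dots,A_n\}$. The first key step is to extend the edges of the polygons $A_j$ to geodesics that cross all the way through $T$: by the Corollary following \Cref{lemm:extend}, each edge geodesic of each $A_j$ extends in both directions until it meets $\partial T$, and by uniqueness of geodesics (distances here are all less than $D_\kappa$ after possibly rescaling/shrinking) these extensions are honest geodesic chords of $T$.

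The main construction is then as follows. Take the collection of all these extended chords, say $\Gamma_1,\dots,\Gamma_m$, and also include the chords from each original vertex of $T$ (if needed) so that the original vertices become vertices of the subdivision. Cutting $T$ successively along $\Gamma_1,\dots,\Gamma_m$ produces a partition of $T$ into non-overlapping convex polygons: convexity of each cell is guaranteed by \Cref{prop:convex_intersection}, since each cell is an intersection of $T$ with finitely many convex "half-disk" regions bounded by the chords, and all the relevant geodesics are unique. Moreover this cutting can be arranged as a sequence of vertex-edge subdivisions: introducing the chords one at a time, each new chord passes through a sequence of existing cells, entering and leaving each cell either at an existing vertex or creating a new vertex on an edge, so within each cell the portion of the new chord is a vertex-edge cut (possibly after first subdividing the cell at the entry/exit points, which are themselves vertex-edge moves). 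This shows the resulting partition $\mathcal{P}$ of $T$ is a vertex-edge partition, and each $A_j$ is exactly the union of the cells of $\mathcal{P}$ contained in it, because we extended all edges of all $A_j$.

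It remains to refine $\mathcal{P}$ to a triangulation satisfying condition (3), i.e., one that restricts to a vertex-edge triangulation of each $A_j$. This is where I expect the main obstacle to lie: naively triangulating each cell independently need not produce something that is globally vertex-edge, and need not restrict correctly inside each $A_j$. The way around this is to triangulate each individual polygon $A_j$ first — each $A_j$ is a convex polygon that is a union of cells, so a vertex-edge triangulation of $A_j$ subordinate to those cells can be obtained by the argument of the base case applied inside $A_j$ — and then triangulate the cells of $\mathcal{P}$ lying outside $\bigcup_j A_j$ compatibly along shared edges. To verify the global object is a vertex-edge triangulation of $T$, one recognizes the total triangulation via a lemma in the spirit of \Cref{lemm:recognize_ve_partition}: one orders the cells so that consecutive cells meet along a single common edge (possible since $T$ minus the chords is simply connected, so the dual graph is a tree and cells can be ordered by a traversal), checks the non-adjacency hypotheses for quadrilateral cells by choosing the interior diagonals consistently, and then applies \Cref{lemm:recognize_ve_partition}, or rather an inductive argument patterned on its proof, peeling off one cell at a time from a leaf of the dual tree. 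The condition that each original vertex of $T$ be a vertex of some cell was arranged in the construction, so the hypotheses of the recognition step are met. Carrying out the bookkeeping — ordering the cells, handling quadrilaterals, and checking that the peeling respects each $A_j$ — is the technical heart of the proof; everything else reduces to the convexity and geodesic-extendability facts already established.
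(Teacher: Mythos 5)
The proposal diverges from the paper's construction in a way that leaves genuine gaps. The paper builds the refinement as a \emph{fan} emanating from a single vertex $p$ of $T$: it extends $[pv_j]$ through every vertex $v_j$ of the $A_i$ down to the opposite side $[qr]$, producing triangles $T_k$ arranged in a chain around $p$; inside each $T_k$ the pieces $A_i\cap T_k$ then stack linearly from $[qr]$ toward $p$, so each $T_k$ and each $A_i$ decomposes as a \emph{chain} of triangles and quadrilaterals, which is exactly the hypothesis of \Cref{lemm:recognize_ve_partition}. You instead extend all edges of all $A_j$ to chords of $T$ and cut along all of them, which gives a partition into convex cells but destroys the chain structure.

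Two concrete problems follow. First, the claim that introducing chords one at a time is a sequence of vertex-edge subdivisions does not hold: when a chord crosses a cell from the interior of one edge to the interior of another, that is not a cut from a vertex to an opposite edge, and your remedy of ``first subdividing at the entry/exit points'' silently introduces auxiliary geodesics that were not among your chords. These auxiliary cuts pass through the $A_j$'s in ways you have not controlled, so the claim that the final triangulation restricts to a vertex-edge triangulation of each $A_j$ — condition (3) of the definition — is no longer supported by the construction. Second, the recognition step fails as stated: \Cref{lemm:recognize_ve_partition} requires the cells to be ordered $P_1,\dots,P_n$ so that $P_i$ and $P_{i+1}$ share a single common edge and otherwise meet only at vertices, i.e.\ the dual graph must be a \emph{path}, not merely a tree. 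A DFS or BFS traversal of a tree does not put adjacent cells next to one another after backtracking, so one cannot apply the lemma to a general planar cell decomposition; some additional argument (or a stronger recognition lemma) would be needed, and you have not supplied it. There is also a near-circularity in the step where you produce a vertex-edge triangulation of $A_j$ subordinate to its cells: the base case only gives some vertex-edge triangulation, not one refining a prescribed partition, which is precisely the content of the theorem.

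In short, the missing idea is the fan from a single vertex. That one choice is what forces every $T_k$ and every $A_i$ to decompose as a \emph{linearly ordered} chain of convex pieces with vertices confined to two geodesics, so that \Cref{lemm:recognize_ve_partition} applies both globally (to $T$) and locally (to each $A_i$) without any auxiliary cuts or tree bookkeeping.
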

The proof is somewhat technical, but the geometric idea at its core can be easily read off from \Cref{fig:vertex_edge}
\begin{proof} 
Let $T = \triangle pqr$ be such a triangle and let $\mathcal{A} = \{A_i\}_{i=1}^n$ be a finite collection of non-overlapping convex polygons in $T$. Enumerate the vertices of the $A_i$ as $\{v_j\}_{j=1}^m$. We give the following procedure to find a vertex-edge refinement. Note that our assumption on $T$ implies that any pair of points in $T$ are connected by a unique geodesic.
\begin{enumerate}[leftmargin=4mm]
    \item For each vertex $v_j$ not contained in $[pq]$ or $[pr]$, extend the unique geodesic $[pv_j]$ to a geodesic that intersects the opposite edge $[qr]$ using \Cref{lemm:extend}. See \Cref{fig:initial_geodesics}. This yields a collection of distinct geodesics $\{I_k\}_{k=1}^K$, enumerated in increasing order based upon where the endpoint lies on $[qr]$, oriented from $q$ to $r$. Note that two geodesics $I_{j_1}, I_{j_2}$ cannot share the same endpoint, since otherwise they must coincide by uniqueness of geodesics. Also, set $I_0 = [pq]$ and $I_{K+1} = [pr]$. For all $k \in \{0, \ldots, K\}$, the geodesics $I_k,I_{k+1}$ together with a subarc of $[qr]$ bound a triangle $T_k$.
    \item For each triangle $T_k$ apply the following. Let $\mathcal{A}_k$ denote the collection of polygons of the form $A_i \cap T_k$ intersecting the interior of $T_k$; denote this polygon by $T_k^i$. By \Cref{prop:convex_intersection}, $T_k^i$ is a convex polygon. Note that each vertex of each $T_k^i$ is contained in $I_k \cup I_{k+1}$. This implies that every edge of some $T_k^i$ not contained in $I_k$ or $I_{k+1}$ must connect $I_k$ to $I_{k+1}$. Enumerate such distinct edges as $\{J_k^j\}_{j=1}^{J_k}$ in increasing order from $[qr]$ to the vertex $p$. Also, denote the subarc of $[qr]$ contained in $T_k$ by $J_k^0$. Note that $J_k^j$ and $J_{k}^{j+1}$ along with the relevant subarcs of $I_k$ and $I_{k+1}$ form either a triangle or quadrilateral. We subdivide each triangle $T_k$ as follows. For all $j \in \{0, \ldots, J_k-1\}$, if $J_k^j$ and $J_k^{j+1}$ do not share an endpoint, then connect an endpoint of $J_k^j$ to an endpoint of $J_k^{j+1}$ by a geodesic, forming two triangles. In this manner we obtain a triangulation $\mathcal{T}$ of $T_k$. The final product is shown in \Cref{fig:end_result}.
\end{enumerate}
\begin{figure} 
    \centering
    \subfloat[Initial setup]{
        \begin{tikzpicture}
           \node () at (0,0) {\includegraphics[width = 1.75in]{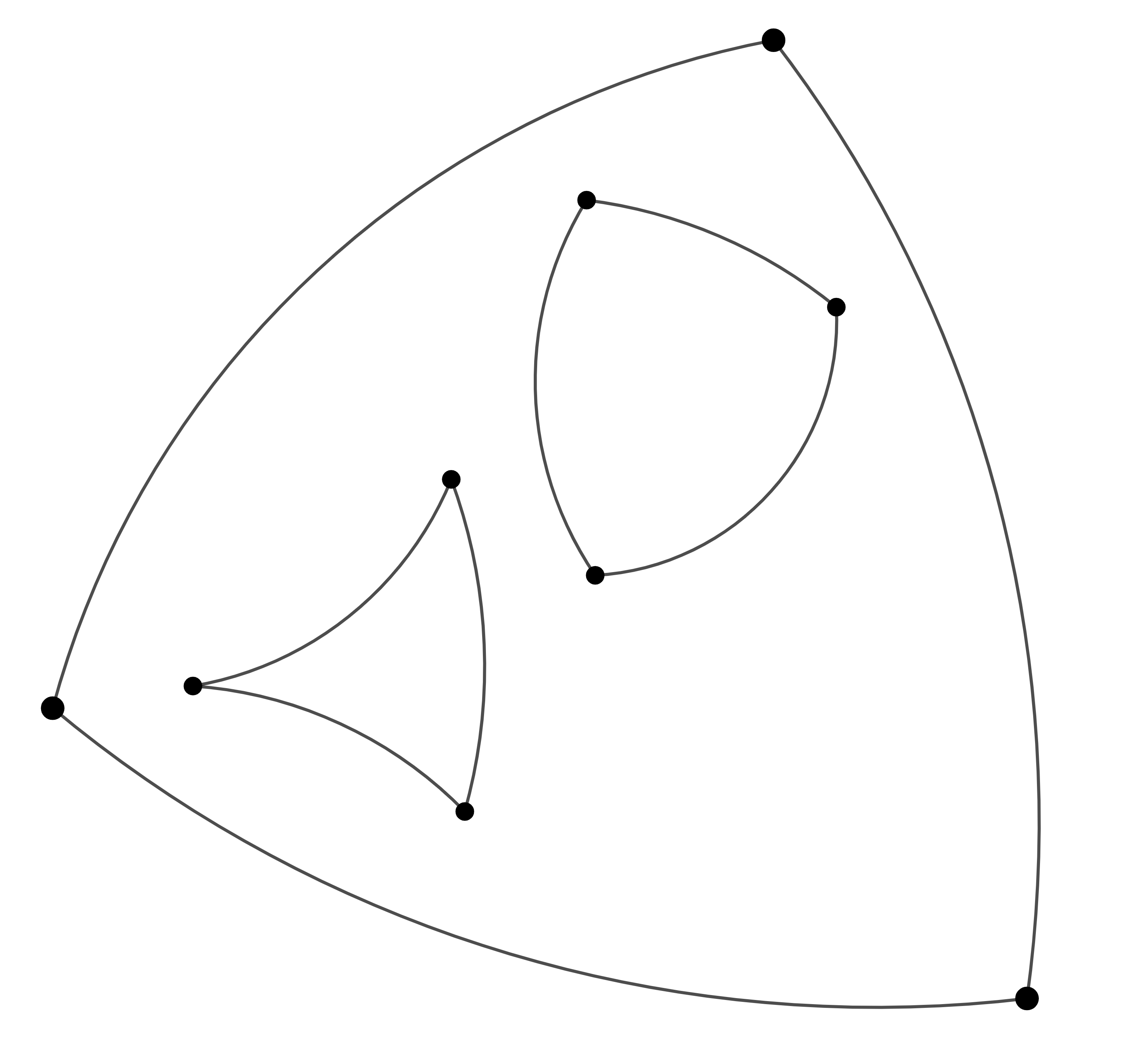}};
           \node () at (-.7,1.6) {{\Large $T$}};
           \node () at (-2.1,-1) {$p$};
           \node () at (1.1,2.0) {$q$};
           \node () at (1.8,-2.1) {$r$};
           \node () at (-.6,-.5) {$A_1$};
           \node () at (.4,.8) {$A_2$};
        \end{tikzpicture}
    \label{fig:setup}}
    \hfill
    \subfloat[Drawing the initial geodesics]{
        \begin{tikzpicture}
           \node () at (0,0) {\includegraphics[width = 1.75in]{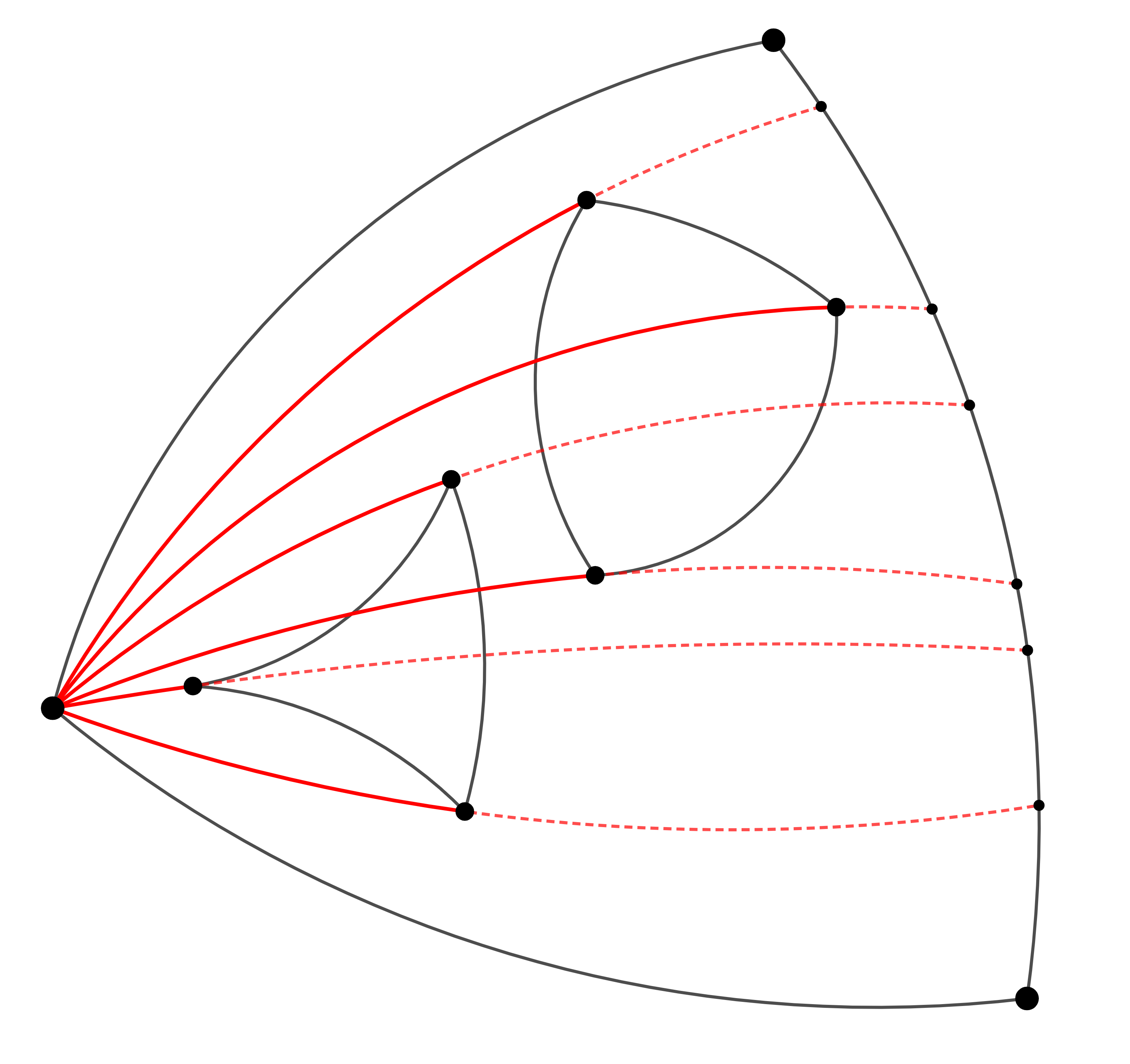}};
           \node () at (-.7,1.6) {{\Large $T$}};
           \node () at (-2.1,-1) {$p$};
           \node () at (1.1,2.0) {$q$};
           \node () at (1.8,-2.1) {$r$};
        \end{tikzpicture}
    \label{fig:initial_geodesics}}
    \hfill
    \subfloat[The final triangulation]{ 
        \begin{tikzpicture}
           \node () at (0,0) {\includegraphics[width = 1.75in]{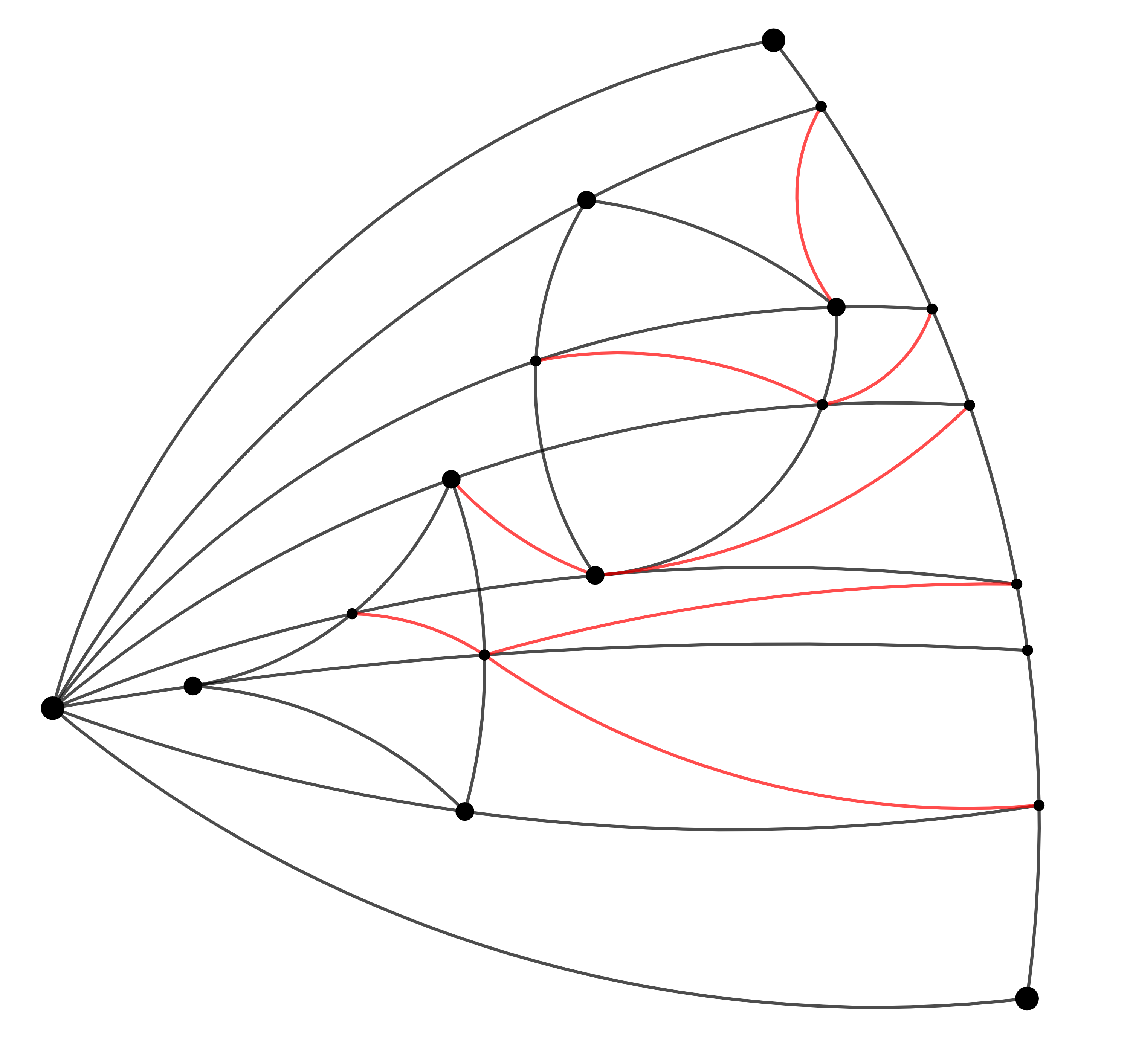}};
           \node () at (-.7,1.6) {{\Large $T$}};
           \node () at (-2.1,-1) {$p$};
           \node () at (1.1,2.0) {$q$};
           \node () at (1.8,-2.1) {$r$};
        \end{tikzpicture}
    \label{fig:end_result}}
    \caption{}
    \label{fig:vertex_edge}
\end{figure}

It follows from construction that $\mathcal{T}$ is a vertex-edge triangulation of $T$. Indeed, $\mathcal{T}$ is obtained first by drawing geodesics from $p$ to the opposite edge to form the subtriangles $T_k$. Then each $T_k$ is the union of triangles satisfying the requirements of \Cref{lemm:recognize_ve_partition}. We conclude that $\mathcal{T}$ is vertex-edge. 


Finally, we claim that, for each $A \in \mathcal{A}$, the collection $\mathcal{T}_A$ can be obtained by vertex-edge subdivision of $A$. Observe that the geodesics $\{I_k\}_{k=1}^K$ divide $A$ into the non-overlapping union of triangles and quadrilaterals satisfying the requirements of \Cref{lemm:recognize_ve_partition}. We now apply this lemma to conclude that $\mathcal{T}_A$ is a vertex-edge triangulation of $A$.
\end{proof} 

\subsection{Subdividing triangles}

The main usefulness of vertex-edge triangulations stems from the following lemma, which states that both excess and model area behave monotonically when subdividing triangles.

\begin{lemm}
\label{lemm:vertex-edge-prelim}
Let $T=\Delta pqr$ be a triangle of perimeter at most $2D_\kappa$ satisfying the $\CAT(\kappa)$ condition and let $s$ be a point on the side $[qr]$. Let $\mathcal{T}$ be the vertex-edge triangulation consisting of the triangles $T_1=\Delta psq$ and $T_2=\Delta psr$ formed by connecting $p$ and $s$. Then
\begin{enumerate}
    \item \label{item:total_excess} Total excess does not decrease: \[\delta(T)\leq\delta(T_1)+\delta(T_2)\]
    \item \label{item:total_model_area} Total model area does not increase: 
    \[|\bar{T}_1|+|\bar{T}_2|\leq |\bar{T}|,\] 
    where $\bar{T},\bar{T}_1,\bar{T}_2$ are the corresponding comparison triangles in the model space $M_\kappa^2$. 
\end{enumerate}
\end{lemm}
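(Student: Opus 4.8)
The plan is to treat (1) by a direct count of angles and (2) by reducing it to an area comparison inside the model plane $M_\kappa^2$. \emph{For (1):} write $\alpha,\beta,\gamma$ for the angles of $T$ at $p,q,r$, and form $T_1=\Delta psq$, $T_2=\Delta psr$ using the sub-arcs $[qs]$ and $[sr]$ of $[qr]$ as their edges. Cutting along $[ps]$ splits the angle at $p$ into $\alpha_1=\angle_p(q,s)$ and $\alpha_2=\angle_p(s,r)$, produces two angles $\sigma_1=\angle_s(p,q)$, $\sigma_2=\angle_s(p,r)$ at $s$, and leaves the angles of $T_1$ at $q$ and of $T_2$ at $r$ equal to $\beta$ and $\gamma$, since $[qs]$ is an initial segment of $[qr]$ and $[rs]$ of $[rq]$. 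The triangle inequality for upper angles gives $\alpha_1+\alpha_2\ge\angle_p(q,r)=\alpha$, and because $s$ is interior to the geodesic $[qr]$ we have $\angle_s(q,r)=\pi$, so $\sigma_1+\sigma_2\ge\pi$. Hence $\delta(T_1)+\delta(T_2)=(\alpha_1+\alpha_2)+(\beta+\gamma)+(\sigma_1+\sigma_2)-2\pi\ge\alpha+\beta+\gamma-\pi=\delta(T)$. All angles involved exist, since the perimeters of $T_1$ and $T_2$ are at most that of $T$ (use $|ps|\le|pr|+|rs|$ and $|qs|+|rs|=|qr|$).

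For (2), let $\bar T=\triangle\bar p\bar q\bar r$ be the comparison triangle of $T$ in $M_\kappa^2$ and $\bar s\in[\bar q\bar r]$ the point with $\bar d_\kappa(\bar q,\bar s)=d(q,s)$. Then $\bar T$ is the non-overlapping union of $\triangle\bar p\bar q\bar s$ and $\triangle\bar p\bar s\bar r$, so $|\bar T|=|\triangle\bar p\bar q\bar s|+|\triangle\bar p\bar s\bar r|$, and the $\CAT(\kappa)$ inequality for $T$ applied to the pair $p,s\in\partial T$ gives $d(p,s)\le\bar d_\kappa(\bar p,\bar s)=:w$. I would now assemble a figure $\Omega\subset M_\kappa^2$ from a congruent copy of $\bar T_1$ with vertices $P,Q,S$ (images of the $p,q,s$-vertices) and a congruent copy of $\bar T_2$ with vertices $P,S,R$, glued along the common side $[PS]$ of length $d(p,s)$ with $Q$ and $R$ on opposite sides of the line $PS$. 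Then $\Omega$ is a (possibly non-convex) geodesic quadrilateral with side lengths $|pq|,|qs|,|sr|,|rp|$, one has $|\Omega|=|\bar T_1|+|\bar T_2|$, and $\partial\Omega$ is a Jordan curve of length equal to the perimeter of $T$, hence at most $2D_\kappa$. The lemma then reduces to producing a $1$-Lipschitz surjection $g\colon\bar T\to\Omega$: a $1$-Lipschitz map does not increase Hausdorff $2$-measure, so such a $g$ forces $|\bar T_1|+|\bar T_2|=|\Omega|\le|\bar T|$.

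I would build $g$ by coning. First map $[\bar p\bar s]$ onto $[PS]$ by arclength scaled by the factor $d(p,s)/w\le1$ (so $\bar p\mapsto P$, $\bar s\mapsto S$); then send the point lying a fraction $\tau$ of the way along $[\bar q y]$, for $y\in[\bar p\bar s]$, to the point a fraction $\tau$ along the geodesic from $Q$ to the image of $y$, and symmetrically on $\triangle\bar p\bar s\bar r$ using $R$. The two prescriptions agree on $[\bar p\bar s]$, the restriction $g|_{\partial\bar T}$ is the evident arclength homeomorphism onto $\partial\Omega$ (so a winding-number argument forces $g(\bar T)\supseteq\Omega$, i.e.\ $g$ is onto $\Omega$), and since $\bar T$ is glued from its two halves along $[\bar p\bar s]$ it is enough to check that each coning half is $1$-Lipschitz. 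The input one needs is that, by the law of cosines in $M_\kappa^2$, the angle of $\bar T_1$ at $Q$ is at most the angle of $\bar T$ at $\bar q$ and the angle of $\bar T_2$ at $R$ is at most the angle of $\bar T$ at $\bar r$ — because $\bar T_1,\bar T_2$ retain two sides at these vertices but have the shorter opposite side $d(p,s)\le w$ — so that passing to $\Omega$ only narrows each cone.

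The hard part will be precisely this last verification: that each coning half — equivalently, the natural identification $\partial\bar T\to\partial\Omega$ — is $1$-Lipschitz into $M_\kappa^2$, uniformly in $\kappa$. The delicate configuration is a pair of points lying on the two sides of $\Omega$ that came from $[qs]$ and $[rp]$; these were not adjacent in $\bar T$ (the fold at $S$ sits between them), so bounding their distance in $\Omega$ requires a hinge estimate through the fold. For $\kappa\le 0$ this is comparatively soft, because the angles of $\bar T$ at $\bar q$ and $\bar r$ already sum to less than $\pi$; the genuine difficulty is the spherical case $\kappa>0$. A more conceptual alternative would be to verify that $\partial\bar T\to\partial\Omega$ is $1$-Lipschitz and then invoke a Kirszbraun-type extension theorem — applicable since $M_\kappa^2$ has curvature bounded below by $\kappa$ and is $\CAT(\kappa)$, with all relevant distances below $D_\kappa$ — to produce the extension $g$.
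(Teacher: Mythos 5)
Part (1) of your argument is the paper's proof essentially verbatim: you split the angle at $p$ using the triangle inequality for upper angles and use $\sigma_1+\sigma_2\ge\pi$ at the interior point $s$ of $[qr]$. That is correct and matches the paper.

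Part (2) is where you diverge, and where there is a genuine gap. The paper's proof does \emph{not} glue $\bar T_1$ and $\bar T_2$ along the common side of length $d(p,s)$. Instead it places $\bar T_1$ inside $\bar T$ sharing the side $[\bar p\bar q]$, places $\bar T_2$ inside $\bar T$ sharing the side $[\bar p\bar r]$, and invokes Alexandrov's Lemma (with hypothesis $\bar\gamma_1+\bar\beta_2\ge\pi$, coming from $\gamma_1\le\bar\gamma_1$, $\beta_2\le\bar\beta_2$ and $\gamma_1+\beta_2\ge\pi$) to conclude $\bar\alpha_1+\bar\alpha_2\le\bar\alpha$, $\bar\beta_1\le\bar\beta$, $\bar\gamma_2\le\bar\gamma$; these inequalities force the two comparison triangles to sit inside $\bar T$ with disjoint interiors (touching only at $\bar p$), so $|\bar T_1|+|\bar T_2|\le|\bar T|$ with no recourse to any Lipschitz map or Hausdorff-measure monotonicity. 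Your route instead reduces everything to the claim that the arclength-preserving boundary identification $\partial\bar T\to\partial\Omega$ (equivalently, the coning extension, or the Kirszbraun extension built from it) is $1$-Lipschitz into $M_\kappa^2$. You flag this yourself as ``the hard part,'' and it is indeed not established. Moreover, it is not merely a technicality to be dispatched uniformly in $\kappa$: $\Omega$ is \emph{concave} at $S$, because the interior angle there is $\bar\gamma_1+\bar\beta_2\ge\pi$, whereas the corresponding interior angle of $\bar T$ at $\bar s$ is exactly $\pi$. So at the vertex $S$ the angle \emph{increases} when passing from $\bar T$ to $\Omega$; the monotonicity you invoke (``passing to $\Omega$ only narrows each cone'') is true at $P,Q,R$ but reversed at $S$, which is precisely the configuration — points on $[QS]$ and $[RP]$ separated by the fold at $S$ — that you identify as delicate and then leave unaddressed. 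Falling back on a Kirszbraun-type extension theorem does not help, because it presupposes the $1$-Lipschitz property of the boundary map, which is exactly the unproved step.

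So the proposal is correct and identical to the paper's for part (1), but for part (2) it is an unfinished sketch of a different argument. The idea of bounding area via a $1$-Lipschitz surjection is reasonable, but you have not shown that the surjection exists, and the concavity of $\Omega$ at $S$ is a real obstruction to the particular coning/arm-lemma strategy you outline. If you want to complete a proof along these lines, you would in any case need the angle inequalities that Alexandrov's Lemma delivers (you derive two of them from hinge monotonicity, but not the one at $P$), at which point the paper's direct nesting argument is shorter and avoids the unresolved Lipschitz estimate entirely.
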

We note that statement \eqref{item:total_excess} is standard; see Lemma III.9 in \cite{AZ:67} for an example of a more general statement. 
\begin{proof}
Denote the angles of $T$ by $\alpha, \beta, \gamma$ and the angles of each $T_i$ by $\alpha_i, \beta_i, \gamma_i$, where $\alpha$, $\alpha_1$, $\alpha_2$ are the angles at the shared vertex $p$ and $\gamma_1$, $\beta_2$ are the angles at the vertex $s$. 

We first prove \eqref{item:total_excess}. The triangle inequality for angles implies that $\alpha \leq \alpha_1+\alpha_2$ and $\pi\leq \gamma_1+\beta_2$. 
    It follows that
    \begin{align*}
        \delta(T)&=\alpha+\beta + \gamma-\pi \\
        &\leq \alpha_1 + \alpha_2 + \beta + \gamma + (\gamma_1+\beta_2-\pi)-\pi\\
        &=(\alpha_1+\gamma_1+\beta - \pi)+(\alpha_2+\beta_2+\gamma-\pi)\\
        &=\delta(T_1)+\delta(T_2).
    \end{align*}


Next, to prove \eqref{item:total_model_area},
    the main task is to show that the comparison triangles $\bar{T}_1$ and $\bar{T}_2$ can be fitted into $\bar{T}$ such that $\bar{T}_1$ and $\bar{T}_2$ are non-overlapping. Our argument is based on Alexandrov's Lemma; see \cite[I.2.16]{BH:99}. Consider the triangle $\bar{T}_1$ arranged to share the side $[\bar{p}\bar{q}] \subset \bar{T}$ and have overlapping interior. Similarly, the triangle $\bar{T}_2$ is arranged to share the side $[\bar{p}\bar{r}] \subset \bar{T}$ and have overlapping interior. Let $\bar{y}$ denote the third vertex of $\bar{T}_1$ and let $\bar{z}$ denote the third vertex of $\bar{T}_2$. The situation is summarized in \Cref{fig:subdivision}.
    

\begin{figure} 
    \centering
        \begin{tikzpicture}
           \node () at (0,0) {\includegraphics[width = 2.75in]{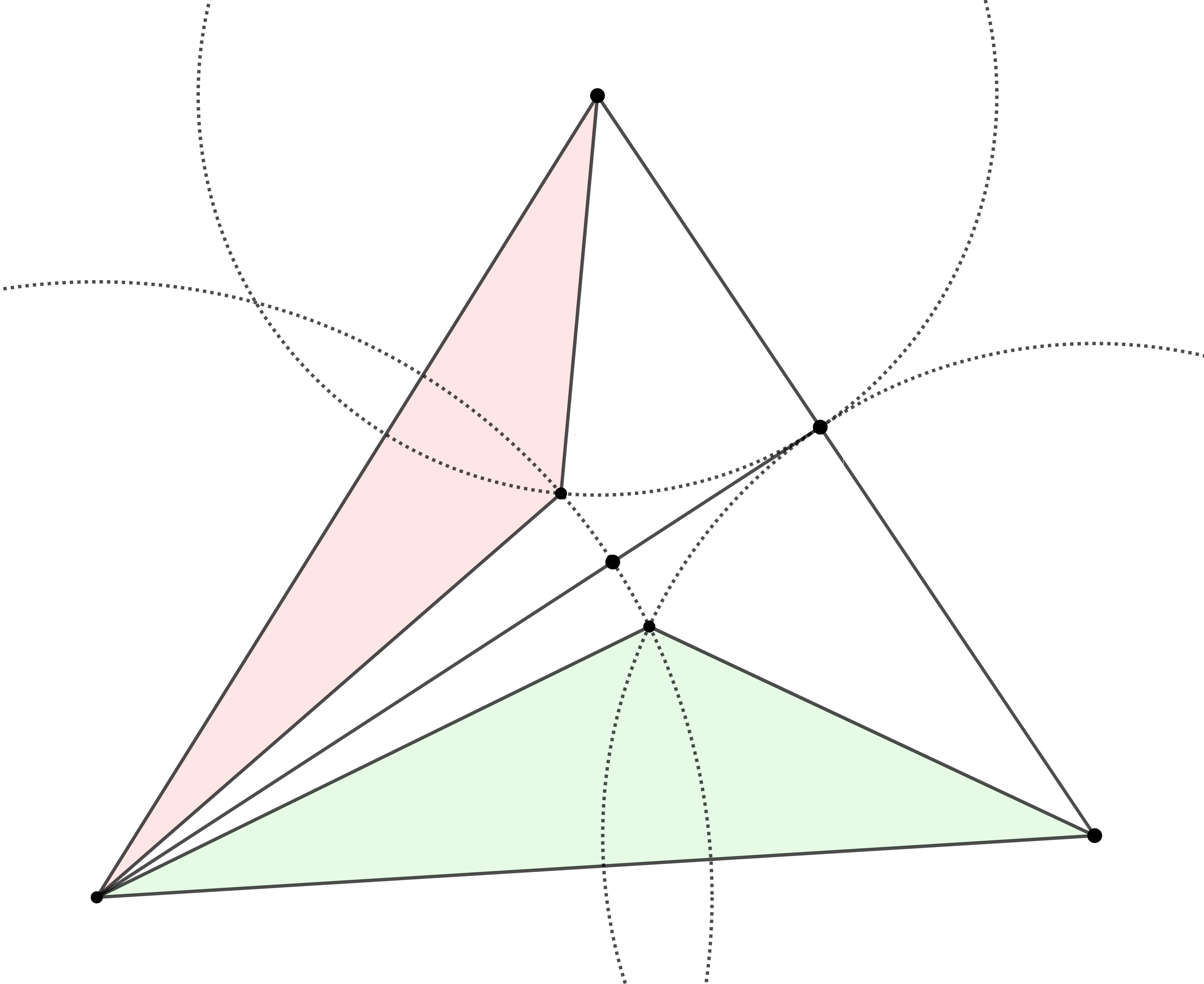}};
           \node () at (1.1,1.3) {{\Large $\bar{T}$}};
           \node () at (-1.2,-.2) {{\Large $\bar{T}_1$}};
           \node () at (-.5,-1.8) {{\Large $\bar{T}_2$}};
           \node () at (-2.9,-2.6) {$\bar{p}$};
           \node () at (0,2.5) {$\bar{q}$};
           \node () at (2.8,-2.2) {$\bar{r}$};
           \node () at (.5,-.7) {$\bar{z}$};
           \node () at (0,.2) {$\bar{y}$};
        \end{tikzpicture}
    \caption{Subdividing a $\CAT(\kappa)$ triangle}
    \label{fig:subdivision}
\end{figure}

Let $\bar{\alpha}$, $\bar{\beta}$, $\bar{\gamma}$ denote the angle in $\bar{T}$ corresponding to $\alpha$, $\beta$, $\gamma$, respectively, and likewise for each $\bar{T}_i$. The $\CAT(\kappa)$ condition implies that $\gamma_1 \leq \bar{\gamma}_1$ and $\beta_2 \leq \bar{\beta}_2$, and hence $\pi \leq \bar{\gamma}_1 + \bar{\beta}_2$. Then Alexandrov's Lemma  states that $\bar{\alpha} \geq \bar{\alpha}_1 + \bar{\alpha}_2$, $\bar{\beta}_1 \leq \bar{\beta}$ and $\bar{\gamma}_2 \leq \bar{\gamma}$.

Since $\bar{\beta}_1 \leq \bar{\beta}$ and $\alpha_1 \leq \alpha$, we see that $\bar{y}$ is contained in $\bar{T}$. Likewise, $\bar{z}$ is also contained in $\bar{T}$. This implies that the triangles $\bar{T}_1$ and $\bar{T}_2$ are entirely contained in $\bar{T}$. Moreover, since $\bar{\alpha} \geq \bar{\alpha}_1 + \bar{\alpha}_2$, the segments $[\bar{p}\bar{r}]$, $[\bar{p}\bar{z}]$, $[\bar{p}\bar{y}]$, $[\bar{p}\bar{q}]$ emanate from $\bar{p}$ counterclockwise in this order, with possibly $[\bar{p}\bar{z}]$, $[\bar{p}\bar{y}]$ coinciding. From this it follows that $\bar{T}_1$ and $\bar{T}_2$ do not overlap. We conclude that $|\bar{T}_1| + |\bar{T}_2| \leq |\bar{T}|$.
\end{proof}

\section{$\CAT(\kappa)$ surfaces have bounded curvature} \label{sec:bic_proofs}

    We now proceed with the proof of \Cref{thm:cat_implies_bic}. Let $X$ be a $\CAT(\kappa)$ surface and let $x \in X$. The first step is to find a neighborhood of $x$ for which we can verify the condition of bounded curvature.  Let $r \in (0,D_\kappa/2)$ be sufficiently small so that $B(x,r)$ is a $\CAT(\kappa)$ space. Since $r< D_\kappa/2$, the set $B(x,r)$ is necessarily convex and contractible in $X$, and in particular is a topological open disk. We assume without loss of generality that $\kappa = 1$. Use \Cref{thm:triangulation} to find a triangulation $\mathcal{S}$ of $B(x,r)$ such that each $T \in \mathcal{S}$ satisfies $d(T,\partial B(x,r)) > \text{diam}(T)$. Let $S_1, \ldots, S_n$ be the collection of triangles in $\mathcal{S}$ that contain $x$. Then $S = \bigcup_{i=1}^n S_i$ is a closed topological disk such that $x \in S^\circ$. To see this, observe from the convexity of each triangle and uniqueness of geodesics that any two distinct triangles $S_i$, $S_j$ intersect in a possibly degenerate arc containing the element $x$. We claim that $S^\circ$ is the neighborhood of $x$ as required by the definition of surface of bounded curvature. 
    
    Let $\mathcal{T} = \{T_j\}_{j=1}^m$ be an arbitrary collection of simple non-overlapping triangles in $S^\circ$. Our goal is to find a uniform upper bound on $\delta(\mathcal{T})$. We apply \Cref{thm:ve_refinement} to find for each $T_j \in \mathcal{T}$ a triangulation $\mathcal{T}_j$ of $T_j$ that is a vertex-edge refinement of the collection of polygons $S_i \cap T_j$, $i = 1, \ldots, n$, intersecting the interior of $T_j$ with respect to $T_j$. Note that \Cref{prop:convex_intersection} implies that each set $S_i \cap T_j$ is in fact a polygon provided that its interior is non-empty. Observe from \Cref{lemm:vertex-edge-prelim} that $\delta(T_j) \leq \delta(\mathcal{T}_j)$. 
    
    Next, for each triangle $S_i$, let $\mathcal{S}_i$ be the set of triangles in $\bigcup_j \mathcal{T}_j$ contained in $S_i$. We apply \Cref{thm:ve_refinement} a second time to $\mathcal{S}_i$ to find a vertex-edge refinement of $\mathcal{S}_i$ with respect to $S_i$. Denote this by $\mathcal{S}_i'$. Suppose that $T \in \mathcal{T}_j$ and contained in $S_i$. Let $\mathcal{S}_i'(T)$ denote the subset of $\mathcal{S}_i'$ contained in $T$. Then, since $\mathcal{S}_i'(T)$ is a vertex-edge triangulation of $T$, we have hence $\delta(T) \leq \delta(\mathcal{S}_i'(T))$ by \Cref{lemm:vertex-edge-prelim}. This gives
    \begin{align*}
        \sum_{j=1}^{m}\delta(T_j)& \leq \sum_{j=1}^{m} \sum_{T \in \mathcal{T}_j} \delta(T) \leq \sum_{i=1}^{n} \sum_{T \in \mathcal{S}_i} \sum_{T' \in \mathcal{S}_i'(T)} \delta(T') \leq \sum_{i=1}^{n} \sum_{T \in \mathcal{S}_i} \sum_{T' \in \mathcal{S}_i'(T)} |\bar{T}_1'|.
    \end{align*}
    Here, $\bar{T}_1'$ is the model triangle in $M_1^2$ corresponding to $T'$. Similarly, for each triangle $S_i$ we let $\bar{S}_i$ denote the model triangle of $S_i$ in $M_1^2$. Since $\mathcal{S}_i'$ is a vertex-edge refinement with respect to $S_i$, \Cref{lemm:vertex-edge-prelim} gives
    \[\sum_{i=1}^{n} \sum_{T \in \mathcal{S}_i} \sum_{T' \in \mathcal{S}_i'(T)} |\bar{T}_1'| \leq \sum_{i=1}^{n} \sum_{T' \in \mathcal{S}_i'} |\bar{T}_1'| \leq \sum_{i=1}^{n} |\bar{S}_i| .\] 
    Combining these inequalities gives an upper bound for $\delta(\mathcal{T}) = \sum_{j=1}^{m}\delta(T_j)$ independent of the choice of $\mathcal{T}$. This completes the proof.

\section{Excess-curvature inequality} \label{sec:excess_curvature}

In this section, we give a proof of \Cref{thm:excess_curvature} relating the excess and total curvature of an arbitrary Jordan triangle in a $\CAT(\kappa)$ surface. First we verify \Cref{prop:hausdorff_vs_alexandrov}, that the Hausdorff area of a surface $X$ of bounded curvature without cusp points coincides with the Alexandrov area (see \Cref{sec:surfaces_bounded_curvature}). This is an application of a theorem of Burago \cite{Bur:65} that the metric tangent at each point is a cone over a circle; see also Theorem 9.10 in \cite{Res:93}. Recall that, for a Borel set $A \subset X$, we use $|A|$ to denote the Hausdorff area (i.e., $2$-measure) and $\text{Area}(A)$ to denote Alexandrov area.  

\begin{proof}[Proof of \Cref{prop:hausdorff_vs_alexandrov}]
We must show that $|A| = \text{Area}(A)$ for an arbitrary Borel set $A$. Since both Hausdorff area and Alexandrov area are determined by their values on polygons, without loss of generality we may assume that $A$ is a polygon. Let $A \subset X$ be a polygon. We can consider $A$ as its own metric space with the induced length metric; this does not affect either notion of area. Let $\varepsilon>0$ be given. For each $x \in X$, let $B_x$ be an open metric ball at $x$ with the property of being $(1+\varepsilon)$-bi-Lipschitz equivalent to a ball in a Euclidean cone over a circle, and let $\mathcal{U} = \{B_x: x \in A\}$. Let $\mathcal{T}$ be a triangulation of $A$ subordinate to the cover $\mathcal{U}$ and having mesh $\varepsilon$ given by \Cref{thm:triangulation}. Define a polyhedral surface $A_\varepsilon$ by replacing each triangle $T \in \mathcal{T}$ with the corresponding model triangle $\bar{T}_0$. It is clear that $|A_\varepsilon| = \text{Area}(A_\varepsilon)$. Moreover, we have that each triangle $T \in \mathcal{T}$ is $(1+\varepsilon)$-bi-Lipschitz equivalent to the triangle $\bar{T}_0$, which implies that  
\[(1+\varepsilon)^{-2}|A| \leq |A_\varepsilon| \leq (1+\varepsilon)^2|A|.  \] 
Letting $\varepsilon \to 0$, we have that $\text{Area}(A_\varepsilon)$ converges to $\text{Area}(A)$; see Theorem VIII.2 in \cite{AZ:67}. From this we conclude that $|A| = \text{Area}(A)$, which completes the proof.     
\end{proof}

We now proceed with the proof of \Cref{thm:excess_curvature}. 

\begin{proof}[Proof of \Cref{thm:excess_curvature}]
Let $X$ be a $\CAT(\kappa)$ surface and let $T \subset X$ be a Jordan triangle with angles $\alpha , \beta, \gamma$. We may consider $T$ as its own subspace of $X$, having the induced length metric $d_T$. Observe that $d_T(x,y) \geq d(x,y)$ for all $x,y \in T$, with equality if $x,y$ belong to the same edge of $T$. We conclude that passing to the metric $d_T$ does not decrease the angles $\alpha, \beta, \gamma$ and hence does not decrease the excess $\delta(T)$. 

We now construct a sequence of model polyhedral surfaces $T_n$ converging uniformly to $T$, where each face is a triangle of constant curvature $\kappa$ of diameter at most $\varepsilon_n$. Note that such a sequence can be found by invoking the general theory of surfaces of bounded curvature. See Theorem III.10 of \cite{AZ:67} for the $\kappa=0$ case; the generalization to arbitrary $\kappa$ is a simple modification. However, for completeness, we provide details for this step.

Fix a decreasing sequence $(\varepsilon_n)$ of positive real numbers limiting to $0$. Assume further that $\varepsilon_1$ is sufficiently small that for each point $x \in T$ the ball $B(x,2\varepsilon_1)$ is $\CAT(\kappa)$. Apply \Cref{thm:triangulation} (with arbitrary choice of open cover $\mathcal{U}$) to obtain a sequence of triangulations $\mathcal{T}_n$ of $T$ corresponding to the parameters $\varepsilon_n$. 

Let $\mathcal{E}_n$ be the edge graph of $\mathcal{T}_n$. Observe that $d(x,y) \leq \widetilde{d}_n(x,y)$ and that, by \Cref{thm:triangulation}, the induced length metric $\widetilde{d}_n$ on $\mathcal{E}_n$ satisfies
\[\widetilde{d}_n(x,y) \leq d(x,y) +\varepsilon_n\] 
for all $x,y \in \mathcal{E}_n$. Define the surface $X_n$ by, for each $S \in \mathcal{T}_n$, gluing in a model triangle of constant curvature $\kappa$ having the same edge lengths into the edge graph $\mathcal{E}_n$. Identify $\mathcal{E}_n$ with a subset of $X_n$ in the natural way, and $X_n$ with $X$ by choosing some bijection that is the identity map on the edge graph $\mathcal{E}_n$. Having made these identifications, denote the new metric on $X$ by $\bar{d}_n$. Let $S_n^j$, $1 \leq j \leq k_n$, denote the collection of faces of $T_n$, each of which is a triangle of constant curvature $\kappa$.

It follows from the $\CAT(\kappa)$ condition that $d(x,y) \leq \bar{d}_n(x,y)$ for all $x,y \in \partial S$ for each triangle $S \in \mathcal{T}_n$. Given two points $x,y \in X$, let $\bar{\Gamma}_n$ be a curve from $x$ to $y$ whose $\bar{d}_n$-length satisfies $\ell_{\bar{d}_n}(\Gamma_n) \leq \bar{d}_n(x,y) + \varepsilon_n$. We assume that the restriction of $\bar{\Gamma}_n$ to each triangle $S \in \mathcal{T}_n$ is a $\bar{d}_n$-geodesic. Define the curve $\Gamma$ by replacing the subcurve of $\bar{\Gamma}_n$ intersecting a given triangle $T$ with the $d$-geodesic connecting the same endpoints. Then $\Gamma$ satisfies $\ell_d(\Gamma) \leq \ell_{\bar{d}_n}(\bar{\Gamma}_n) + 2\varepsilon_n$. Combining these inequalities gives
\[d(x,y) \leq \ell_d(\Gamma) \leq \ell_{\bar{d}_n}(\bar{\Gamma}_n) + 2\varepsilon_n \leq \bar{d}_n(x,y) + 3\varepsilon_n .\] 

Next, for any points $x,y \in X$, we can find points $x',y' \in \mathcal{E}_n$ such that $\bar{d}_n(x,x') \leq \varepsilon_n$ and $\bar{d}_n(y,y') \leq \varepsilon_n$. So for all $x,y \in X$, we have
\[\bar{d}_n(x,y) \leq \bar{d}_n(x',y') + 2\varepsilon_n \leq \widetilde{d}_n(x',y')+2\varepsilon_n \leq d(x',y')+ 3 \varepsilon_n \leq d(x,y) + 5\varepsilon_n. \]
We conclude that the sequence of surfaces $T_n$ converges uniformly to $T$ with its induced length metric.

Note that $T_n$ is also a triangle, since the edges of $T$ are still geodesics in the polyhedral approximation. Let $\alpha_n, \beta_n, \gamma_n$ denote the angles in $T_n$ corresponding to the same edges in $T$ as $\alpha,\beta,\gamma$, respectively. The $\CAT(\kappa)$ condition implies that $\alpha \leq \alpha_n$, $\beta \leq \beta_n$, $\gamma \leq \gamma_n$, and thus that $\delta(T) \leq \delta(T_n)$. The excess moreover satisfies \[\delta(T_n) \leq \sum_{j=1}^{k_n} \delta(S_n^j);\] see Lemma III.9 in \cite{AZ:67}, noting that the proof applies to our situation.

We observe that $\delta(S_n^j) = \kappa|S_n^j|$ for each triangle $S_n^j$ by Girard's Theorem and its generalization. Combining these inequalities gives
\[\delta(T) \leq \delta(T_n) \leq \sum_{j=1}^{k_n} \delta(S_n^j) = \sum_{j=1}^{k_n} \kappa |S_n^j| = \kappa |T_n|.\]
Letting $n \to \infty$, the right-hand side converges to $|T|$. We conclude that $\delta(T) \leq \kappa |T|$.
\end{proof}

\section{Approximation by smooth Riemannian surfaces} \label{sec:smooth_approximation}

In this section, we prove that every $\CAT(\kappa)$ surface, $\kappa  \in \mathbb{R}$, is the uniform limit of smooth Riemannian surfaces with Gaussian curvature bounded above by $\kappa$ and having locally uniformly bounded area. As already discussed in the proof of \Cref{thm:excess_curvature}, it is well-known that such a surface is the uniform limit of $\CAT(\kappa)$ model polyhedral surfaces. 
What remains is to show that one can smoothen out the vertices while retaining the $\CAT(\kappa)$ condition.

Let $X$ be a polyhedron with faces of constant curvature $\kappa$. The metric in a neighborhood of a vertex, in polar coordinates, is defined by the conformal length element $ds^2 = \lambda(r)(dr^2+r^2 d\theta^2)$, where
\begin{equation} \label{equ:polar_metric}
  \lambda(r) = \frac{4\alpha^2 r^{2(\alpha-1)}}{(1+\kappa \cdot r^{2\alpha})^2}  
\end{equation}
and $\alpha\geq 1$ is the total angle at the vertex divided by $2\pi$. If $\alpha = 1$, then we just have the usual constant-curvature metric. Observe that $\lim_{r\to 0} \lambda(r)=0$ if $\alpha>1$. The Gaussian curvature of this metric outside of the vertex is given by the formula
\begin{equation*} 
  K(r) = \frac{-1}{2\lambda(r)}\left(\frac{\partial^2}{\partial r^2} \log(\lambda(r)) + \frac{1}{r} \frac{\partial}{\partial r} \log(\lambda(r)) \right).  
\end{equation*}

We have the following main lemma.  

\begin{lemm} \label{lemm:polyhedral_approx}
Consider the Euclidean ball $B_{\Euc}(\mathbf{0},R) \subset \mathbb{R}^2$ for some $R>0$, equipped with the metric $ds^2 = \lambda(r)(dr^2+r^2 d\theta^2)$, with $\lambda$ given by \eqref{equ:polar_metric}, with $\alpha > 1$. For all $\delta \in (0,R)$ sufficiently small, there is a smooth Riemannian metric $ds_\delta^2 = \lambda_\delta(r)(dr^2+r^2d\theta^2)$ that agrees with $ds^2$ on $B_{\Euc}(\mathbf{0},R)$ outside the Euclidean ball $B_{\Euc}(\mathbf{0},\delta)$ and has Gaussian curvature at most $\kappa$. Moreover, $\lambda_\delta$ is an increasing function on $B_{\Euc}(\mathbf{0},\delta)$. 
\end{lemm}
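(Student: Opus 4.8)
The plan is to reduce the curvature bound to a differential inequality on the conformal factor, to observe that constant-curvature pieces near the origin are automatically admissible, and then to build $\lambda_\delta$ by an explicit interpolation between the original metric (kept on $\{r\ge\delta\}$) and such a piece near $r=0$. Throughout I treat $\kappa>0$; the case $\kappa=0$ (a Euclidean cone of angle $2\pi\alpha$) is strictly easier, since there the bound $K\le 0$ is just convexity of the circumference function.

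First I would set up the curvature condition. Writing $u=\frac12\log\lambda_\delta$, the displayed curvature formula gives $K=-\frac1{2\lambda_\delta r}\big(r u'\big)'$, so $K\le\kappa$ is equivalent to $\big(r(\log\lambda_\delta)'(r)\big)'\ge-2\kappa\,r\,\lambda_\delta(r)$. Since $\kappa\ge 0$ and $\lambda_\delta>0$, on any subinterval where we may modify $\lambda$ it suffices to keep $r\mapsto r(\log\lambda_\delta)'(r)$ non-decreasing, i.e. to keep the curvature there non-positive. It also helps to pass to the arclength parameter $s=\int_0^r\sqrt{\lambda_\delta}$, in which the metric reads $ds^2+g(s)^2d\theta^2$ with $g(s)=r\sqrt{\lambda_\delta(r)}$: a short computation identifies the unmodified model as $g(s)=\frac{\alpha}{\sqrt\kappa}\sin(\sqrt\kappa\,s)$, and $K\le\kappa$ becomes $g''+\kappa g\ge 0$, equivalently that the Wronskian $W(s)=\frac1{\sqrt\kappa}g'(s)\sin(\sqrt\kappa\,s)-g(s)\cos(\sqrt\kappa\,s)$ is non-decreasing on $\{s<D_\kappa\}$, which small $\delta$ guarantees.

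Second, smoothness at the origin: the metric $\lambda_\delta(r)(dr^2+r^2d\theta^2)$ extends across $r=0$ to a smooth Riemannian metric precisely when $\lambda_\delta$ extends to a positive smooth even function of $r$ (equivalently, when $g$ extends to a smooth odd function with $g'(0)=1$). In particular a rescaled constant-curvature factor $\lambda_\delta(r)=4a^2(1-b r^2)^{-2}$ with $a,b>0$ is admissible near $0$, has curvature $-b/a^2<0\le\kappa$ there, and is strictly increasing in $r$. The construction is then: take $\lambda_\delta=\lambda$ on $\{r\ge\delta\}$ and, on $[0,\delta]$, interpolate by an explicit formula (built from a fixed smooth cutoff) between the infinite jet of $\lambda$ at $r=\delta$ and a factor of the above type near $r=0$, with $a,b$ chosen so that $\lambda_\delta(0)\in(0,\lambda(\delta))$ and $\lambda_\delta$ is positive and increasing on $[0,\delta]$. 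Geometrically this glues a smooth cap $g_\delta$ on $[0,s_\delta]$, with $g_\delta(0)=0$ and $g_\delta'(0)=1$, to the re-based model $\frac{\alpha}{\sqrt\kappa}\sin\!\big(\sqrt\kappa(s-c)\big)$ for $s\ge s_\delta$, where $c=\int_0^\delta\big(\sqrt{\lambda_\delta}-\sqrt\lambda\big)\,dr$ is the radial shift induced by the modification; the construction makes $c>0$, and this positivity is exactly what reconciles the smooth matching at $r=\delta$ with $W$ being non-decreasing (so there is no conflict with Gauss--Bonnet). Granting the estimate below, one then checks routinely that $\lambda_\delta$ is $C^\infty$ on $B_{\Euc}(\mathbf{0},R)$, equals $\lambda$ outside $B_{\Euc}(\mathbf{0},\delta)$, is even and positive near $0$, is increasing on $[0,\delta]$, and satisfies $K\le\kappa$ everywhere: on $\{r\ge\delta\}$ one has $K=\kappa$ and near $0$ one has $K<0$.

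I expect the transition annulus to be the one real obstacle. Because the model has $K=\kappa$ exactly just inside $r=\delta$, the quantity $r(\log\lambda)'(r)$ is strictly decreasing there, while $r(\log\lambda_\delta)'(r)$ vanishes at the origin and is increasing on the inner cap; thus $r(\log\lambda_\delta)'$ must rise from $0$, overshoot, and descend to meet the model's profile, and on the descending portion $\big(r(\log\lambda_\delta)'\big)'<0$, so mere monotonicity no longer suffices and one must verify the full inequality $\big(r(\log\lambda_\delta)'\big)'\ge-2\kappa r\lambda_\delta$. Here both sides are of comparable size as $\delta\to 0$, so the estimate is tight; the reason for writing an explicit interpolating formula, rather than a soft partition-of-unity patch, is precisely to control the size of the overshoot and the slope of the descending part well enough to pass it, uniformly for all $\delta$ below some $\delta_0>0$. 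Everything else in the lemma then follows from the two reductions above.
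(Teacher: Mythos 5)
Your overall strategy---keep $\lambda$ outside $\{r\ge\delta\}$, put a smooth constant-curvature piece near $r=0$, and interpolate in a thin annulus---is the right one, and your reformulations of $K\le\kappa$ (via $\big(r(\log\lambda_\delta)'\big)'\ge-2\kappa r\lambda_\delta$ and via $g''+\kappa g\ge 0$ in arclength) are both correct and useful. You have also correctly located the difficulty: the curvature estimate in the transition annulus. But you stop exactly there: you neither write down the interpolating formula nor verify the tight inequality you acknowledge is needed, so the proof has a genuine gap at its central step.

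The missing idea is \emph{where} to put the cutoff. You propose to interpolate $\lambda$ (or its jet at $r=\delta$) directly against a constant-curvature factor. If you do that, the curvature $K_\delta$ involves $\partial_r^2\log\lambda_\delta$, hence $\varphi_\delta''$, whose sign oscillates and cannot be controlled; this is exactly the ``tight estimate'' you could not pass. The paper instead interpolates at the level of the \emph{first} log-derivative: set
\[
g_\delta(r)=\Big(\partial_r\log\lambda(r)\Big)\,\varphi_\delta(r),\qquad
\lambda_\delta(r)=\lambda(\delta)\exp\!\Big(\int_\delta^r g_\delta(t)\,dt\Big),
\]
so $\partial_r\log\lambda_\delta=g_\delta$. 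Now $K_\delta$ depends on $g_\delta'+g_\delta/r$, which brings in only $\varphi_\delta'$, and the extra term is $\big(\partial_r\log\lambda\big)\varphi_\delta'\ge 0$ (because $\partial_r\log\lambda>0$ for $r$ small, since $\alpha>1$). Dropping that favorable term reduces the bound $K_\delta\le\kappa$ to $2\kappa\lambda\varphi_\delta\le 2\kappa\lambda_\delta$, i.e. $\lambda\varphi_\delta\le\lambda_\delta$, which is then a clean convexity inequality ($\lambda\varphi_\delta\le\lambda^{\varphi_\delta}$, since $0<\lambda\le 1$ and $0\le\varphi_\delta\le 1$, together with monotonicity of $\varphi_\delta$ inside the integral). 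This choice of interpolation also makes $\lambda_\delta$ constant on $[0,\delta/2]$ (hence automatically smooth and flat there, with no need for your odd/even analysis of $g$), and makes the monotonicity claim immediate since $g_\delta\ge 0$.

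So: same global plan, but the specific choice of interpolating $(\log\lambda)'$ rather than $\lambda$ is not an implementation detail you can defer---it is the step that makes the tight transition estimate verifiable at all, and it is absent from your proposal.
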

\begin{proof} 
We give two approaches to proving this lemma: the first based on interpolating with a flat surface and the second based on interpolating with a hyperbolic surface. The first approach is simpler but only covers the case where $\kappa \geq 0$. 

\textit{1. Interpolating with a flat metric.} We assume here that $\kappa \geq 0$. We define the smoothened metric $\lambda_\delta$ as follows. First, we define for each $0<a<b$ the smooth cutoff function $\varphi_{a,b}: [0, \infty) \to [0,1]$ by
\[\varphi_{a,b}(r) = \left\{\begin{array}{cl} 0 & \text{ if }  r \leq a\\ \frac{e^{-1/(r-a)}}{e^{-1/(b-r)} +e^{-1/(r-a)}} & \text{ if } a \leq r \leq b\\ 1 &  \text{ if } r \geq b \end{array}  \right. . \] 
Write $\varphi_b = \varphi_{b/2,b}$. Next, let 
\[g_\delta(r) = \left(\frac{\partial}{\partial r} \log(\lambda(r))\right) \varphi_{\delta}(r) \] and
\[\lambda_\delta(r) = \lambda(\delta) \exp \left( \int_\delta^r g_\delta(t)\,dt \right). \] 
We claim that this choice of $\lambda_\delta$ gives the required properties. It is easy to see that $\lambda_\delta(r) = \lambda(r)$ whenever $r \geq \delta$. Moreover, $g_\delta(r) = 0$ if $r \leq \delta/2$, from which it follows that $\lambda_\delta$ is constant on the interval $[0,\delta]$. From this, we see that $ds_\delta$ is locally isometric to the plane on $B(\mathbf{0},\delta)$. We conclude that $ds_\delta$ is a smooth Riemannian metric. Observe now that
\begin{equation} \label{equ:log_deriv}
    \frac{\partial}{\partial r} \log(\lambda(r)) =  \frac{-2+2\alpha - 2\kappa \cdot r^{2\alpha}(1+\alpha)}{r(1+\kappa \cdot r^{2\alpha})},
\end{equation}
which is positive provided that $\alpha>0$ and $r$ is sufficiently small depending on $\alpha$. Take $\delta>0$ small enough so that the quantity in \eqref{equ:log_deriv} is positive for all $r \in (0,\delta)$. It follows that $g_\delta(r)$ is non-negative for all $r \in (0,\delta)$ and thus that $\lambda_\delta(r)$ is increasing on this interval. We also choose $\delta$ small enough so that $\lambda(\delta) \leq 1$. 

The curvature of the metric $ds_\delta^2$ is
\begin{align*}
  K_\delta(r) & =  \frac{-1}{2\lambda_\delta(r)}\left(g_\delta'(r) + \frac{g_\delta(r)}{r} \right)  \\
  & = \frac{-1}{2\lambda_\delta(r)}\left(\left(\frac{\partial}{\partial r} \log(\lambda(r))\right) \varphi_\delta'(r) + \left(\frac{\partial^2}{\partial r^2} \log(\lambda(r))\right) \varphi_\delta(r) + \frac{\left(\frac{\partial}{\partial r} \log(\lambda(r))\right) \varphi_\delta(r)}{r} \right).
\end{align*}

To complete the proof, we must show that $K_\delta(r) \leq \kappa$. This is equivalent to the inequality
\begin{equation} \label{equ:curvature_inequality}
  -g_\delta'(r) - \frac{g_\delta(r)}{r} \leq 2\kappa \cdot \lambda_\delta(r),  
\end{equation}
which we can write as
\[-\left(\frac{\partial}{\partial r} \log(\lambda(r))\right) \varphi_\delta'(r) - \left(\frac{\partial^2}{\partial r^2} \log(\lambda(r))\right) \varphi_\delta(r) - \frac{\left(\frac{\partial}{\partial r} \log(\lambda(r))\right) \varphi_\delta(r)}{r} \leq 2\kappa \cdot \lambda(\delta) \exp \left( \int_\delta^r g_\delta(t)\,dt \right). \] 
To handle the first term on the left-hand side of this equation, observe that $\left(\frac{\partial}{\partial r} \log(\lambda(r))\right) \varphi_\delta'(r) \geq 0$, again using the property that $\frac{\partial}{\partial r} \log(\lambda(r))>0$ for all $r \in (0,\delta)$. The other two terms can be written as
\begin{align*}
  \left(-\frac{\partial^2}{\partial r^2} \log(\lambda(r)) - \frac{\left(\frac{\partial}{\partial r} \log(\lambda(r))\right) }{r} \right) \varphi_\delta(r) = 2\kappa \cdot \lambda(r) \varphi_\delta(r).
\end{align*}
Thus, to verify \eqref{equ:curvature_inequality}, it is enough to show that $2\kappa \cdot \lambda(r) \varphi_\delta(r) \leq 2\kappa \cdot \lambda_\delta(r)$ for all $r \in (\delta/2, \delta)$. This is trivially satisfied if $\kappa=0$. If $\kappa>0$, this can be written as
\begin{equation} \label{equ:curvature_verification} \lambda(r) \varphi_\delta(r) \leq \lambda(\delta) \exp \left( \int_\delta^r g_\delta(t)\,dt \right). 
\end{equation} 
First, since $\lambda(r) \leq \lambda(\delta) \leq 1$ and $\varphi_\delta(r) \leq 1$ for all $r \in (0,\delta)$, we have
\[\lambda(r) \varphi_\delta(r) \leq \lambda(r) \leq \lambda(r)^{\varphi_\delta(r)} = \exp \left( (\log \lambda(r) ) \varphi_\delta(r) \right). \]
Then, using the properties that $r< \delta$ and that $\varphi_\delta$ is increasing and bounded by $1$, we obtain
\begin{align*}
    \exp \left( (\log \lambda(r) ) \varphi_\delta(r) \right) & = \exp \left( \left(\log \lambda(\delta) + \int_\delta^r \frac{\partial}{\partial t} \log \lambda(t)\,dt \right) \varphi_\delta(r) \right) \\
    & \leq \exp \left( \left(\log \lambda(\delta) + \int_\delta^r \frac{\partial}{\partial t} \log \lambda(t) \varphi_\delta(t) \,dt \right)  \right) \\
    & = \lambda(\delta) \exp\left(\int_\delta^r g_\delta(t)\,dt \right).
\end{align*}
This concludes our verification that $K_\delta(r) \leq \kappa$. 

\textit{2. Interpolating with the hyperbolic metric.} The following proof handles the general case. It is similar to the previous one, so we focus on the differences. By rescaling, we may assume that $\kappa \geq -1$. Let $\bar{\lambda}(r) = 4/(1-r^2)^2$ be the conformal weight for the hyperbolic metric, which is defined for all $r \in (0,1)$. We make two immediate observations. First, since $\alpha >1$ and $\kappa \geq -1$, we have $\bar{\lambda}(r) \geq \lambda(r)$ for all $r \in (0,1)$. Second, the inequality
\begin{equation} \label{equ:logarithm}
\frac{\partial}{\partial r} \log(\lambda(r)) \geq \frac{\partial}{\partial r} \log(\bar{\lambda}(r)) \end{equation}  
holds for all $r>0$ sufficiently small. Assume then that $\delta>0$ is small enough that \eqref{equ:logarithm} holds for all $r \leq \delta$. We also necessarily assume that $\delta < 1$.

Our main modification is to redefine $g_\delta$ above using the formula
\[g_\delta(r) =  \left(\frac{\partial}{\partial r} \log(\lambda(r))\right) \varphi_{\delta}(r) + \left(\frac{\partial}{\partial r} \log(\bar{\lambda}(r))\right)(1- \varphi_{\delta}(r)).\] 
Keep the same definition for $\lambda_\delta$, except that we use the new formula for $g_\delta$. The conformal weight $\lambda_\delta$ on $B_{\text{Euc}}(\mathbf{0},\delta/2)$ is a rescaling of the hyperbolic metric by the factor $\lambda(\delta) \exp\left(\int_{\delta}^{\delta/2} g_\delta(t)\,dt\right) \leq \lambda(\delta/2)$, which is less than $1$ since $\delta<1$. It follows that $K_\delta(r) \leq \kappa$ whenever $r \leq \delta/2$.

Next, to verify \eqref{equ:curvature_inequality}, there are two details to check. First, we observe from \eqref{equ:logarithm} that  \[\left(\frac{\partial}{\partial r} \log(\lambda(r))- \frac{\partial}{\partial r} \log(\bar{\lambda}(r))\right) \varphi_\delta'(r) \geq 0\] for all $r \in (0, \delta)$. Then, instead of \eqref{equ:curvature_verification} we must check that
\[ \kappa \cdot \lambda(r) \varphi_\delta(r) - \bar{\lambda}(r) (1-\varphi_\delta(r)) \leq \kappa \cdot \lambda_\delta(r)\]
for all $r \in (0,\delta)$. This is trivial if $\kappa=0$. If $\kappa>0$, it suffices to show that
\[ \lambda(r) \varphi_\delta(r) \leq \lambda_\delta(r) ,\]
which can be done similarly to the first part of the proof. In the case where $\kappa<0$, it suffices to show that
\[\lambda(r) \varphi_\delta(r) + \bar{\lambda}(r) (1-\varphi_\delta(r)) \geq \lambda(\delta) \exp \left( \int_\delta^r g_\delta(t) \right) . \]
Note that, since $\kappa<0$, this inequality is reversed relative to \eqref{equ:curvature_verification}. We first use Young's inequality to obtain
\[\lambda(r) \varphi_\delta(r) + \bar{\lambda}(r) (1-\varphi_\delta(r)) \geq \exp\left( (\log \lambda(r)) \varphi_\delta(r) + (\log \bar{\lambda}(r))(1-\varphi_\delta(r))\right). \]
The right-hand side can be written as
\[\exp \left(  \left( \log \lambda(\delta) + \int_\delta^r \frac{\partial}{\partial t} \log(\lambda(t))\,dt\right) \varphi_{\delta}(r) + \left(\log \bar{\lambda}(\delta) + \int_\delta^r \frac{\partial}{\partial t} \log(\bar{\lambda}(t)) \,dt \right)(1- \varphi_{\delta}(r))   \right).  \]
Since $\bar{\lambda}(r) \geq \lambda(r)$ and $\varphi_\delta$ is increasing, this quantity is greater than 
\begin{align*}
  &  \exp \left(  \log \lambda(\delta) + \left(\int_\delta^r \frac{\partial}{\partial t} \log(\lambda(t))\varphi_\delta(t) \,dt\right) + \left( \int_\delta^r \frac{\partial}{\partial t} \log(\bar{\lambda}(t))(1- \varphi_{\delta}(t)) \,dt\right)   \right) \\
  = & \lambda(\delta) \exp \left(\int_\delta^r g_\delta(t) \,dt \right). 
\end{align*}
This concludes the proof.
\end{proof} 

We now proceed with the proof of \Cref{thm:smooth_approx}.

\begin{proof}[Proof of \Cref{thm:smooth_approx}] 

Let $X$ be a $\CAT(\kappa)$ surface, where $\kappa \geq 0$. Let $(\varepsilon_n)$ be a sequence of positive reals limiting to $0$, and $X_n$ the approximating model polyhedral surface for the value $\varepsilon_n$. In particular, there is a homeomorphic $\varepsilon_n$-isometry $\psi_n \colon X_n \to X$ with the property that $|\psi_n^{-1}(A)|$ converges to $|A|$ for all sets $A \subset X$. For each vertex $v_i$ of $X_n$, $i \in \mathbb{N}$, let $r_i>0$ be such that $B(v_i,2r_i)$ does not contain any other vertex of $X_n$. Note that this implies that the balls $B(v_i,r_i)$ are disjoint. We also require that $\sum_{i=1}^\infty r_i < \varepsilon_n$. The ball $B(v_i,r_i)$ is locally isometric to $B_{\Euc}(\mathbf{0},R_i)$ for some $R_i>0$, equipped with the metric given in \Cref{lemm:polyhedral_approx}. Choose $\delta_i>0$ sufficiently small so that $B_{\Euc}(\mathbf{0},\delta_i)$ has diameter and area at most $\varepsilon_n 2^{-i}$, which can be done since $\lambda_\delta$ is increasing for $r \in (0,\delta)$ and $\lambda(\delta) \to 0$ as $\delta \to 0$. Let $Y_n$ be the smooth Riemannian surface obtained by applying \Cref{lemm:polyhedral_approx} to each neighborhood $B_{\Euc}(\mathbf{0},\delta_i)$. Let $V_i$ denote the neighborhood in $X_n$ on which the metric is altered.

Define a map $\varphi_n$ from $X_n$ to $Y_n$ by mapping $V_i$ to the corresponding neighborhood of $Y_n$ in a continuous way fixing $\partial V_i$ and as the identity map otherwise. By construction, $\varphi_n$ is a $(2\varepsilon_n)$-isometry. Moreover, for any set $A \subset X_n$, we have $|\varphi_n(A)| \leq |A| + \varepsilon_n$. Then $\psi_n \circ \varphi_n^{-1}$ is a $(3\varepsilon)$-isometry from $Y_n$ to $X$ with the property that $|\varphi_n \circ \psi_n^{-1}(A)|$ converges to $|A|$ for all sets $A \subset X$. 
\end{proof}

\begin{rem} \label{rem:cbb_case}
The ideas of \Cref{lemm:polyhedral_approx}  also apply to smoothing vertices of model polyhedral surfaces with curvature bounded below. Assume that $X$ is a model polyhedral surface with curvature bounded below for some $\kappa \leq 0$. Then the metric is given in a neighborhood of a vertex by \eqref{equ:polar_metric}, where $\alpha < 1$. Thus, for sufficiently small $r$, one has $\frac{\partial}{\partial r} \log(\lambda(r))>0$ by \eqref{equ:log_deriv}. Since $\kappa \leq 0$, the chain of inequalities at the end of the first part of the proof of \Cref{lemm:polyhedral_approx} establishes \eqref{equ:curvature_inequality} with ``$\leq$'' replaced by ``$\geq$''. We conclude that the Riemannian approximation in \Cref{lemm:polyhedral_approx} has curvature bounded below by $\kappa$.

The general case can be handled by interpolating with the spherical metric, by modifying the above argument for the hyperbolic metric. We leave the details to the interested reader.  
\end{rem}

\bibliographystyle{abbrv}
\bibliography{bibliography}

\end{document}